\newcommand{\n}{\operatorname{n}}
\newtheorem{remark}{Remark}[section]
\newtheorem{lemma}[remark]{Lemma}
\newtheorem{theorem}[remark]{Theorem}
\newtheorem{proposition}[remark]{Proposition}
\title{Roman domination in direct product graphs and rooted product graphs}
\author{Abel Cabrera Mart\'{\i}nez$^{(1)}$, Iztok Peterin$^{(2,3)}$ and Ismael G. Yero$^{(4)}$\\
  \\
$^{(1)}${\small Universitat Rovira i Virgili}\\
{\small Departament d'Enginyeria Inform\`atica i Matem\`atiques, Spain.}
\\{\small abel.cabrera\@@urv.cat}\\
$^{(2)}$ {\small University of Maribor}\\
{\small Faculty of Electrical Engineering and Computer Science, Slovenia.}\\
$^{(3)}$ {\small IMFM, Jadranska 19, 1000 Ljubljana, Slovenia.}\\
{\small iztok.peterin\@@um.si} \\
$^{(4)}$ {\small Universidad de C\'adiz}\\
{\small Departamento de Matem\'aticas, Escuela Polit\'ecnica Superior de Algeciras, Spain.}\\
{\small ismael.gonzalez\@@uca.es}
}
\date{ }
\begin{document}
\maketitle

\begin{abstract}
Let $G$ be a graph with vertex set $V(G)$. A function $f:V(G)\rightarrow \{0,1,2\}$ is a Roman dominating function on $G$ if every vertex $v\in V(G)$ for which $f(v)=0$ is adjacent to at least one vertex $u\in V(G)$ such that $f(u)=2$. The Roman domination number of $G$ is the minimum weight $\omega(f)=\sum_{x\in V(G)}f(x)$ among all Roman dominating functions $f$ on $G$. In this article we study the Roman domination number of direct product graphs and rooted product graphs. Specifically, we give several tight lower and upper bounds for the Roman domination number of direct product graphs involving some parameters of the factors, which include the domination, (total) Roman domination, and packing numbers among others. On the other hand, we prove that the Roman domination number of rooted product graphs can attain only three possible values, which depend on the order, the domination, and the Roman domination numbers of the factors in the product. In addition, theoretical characterizations of the classes of rooted product graphs achieving each of these three possible values are given.
\end{abstract}

{\it Keywords}:  Roman domination; domination; Direct product graph; Rooted product graph.

\section{Introduction}

Let $G$ be a simple graph with\emph{ vertex set} $V(G)$ where $\n(G)=|V(G)|$. Given a vertex $v$ of $G$, $N_G(v)$ will denote the \emph{open neighborhood} of $v$ in $G$. The \emph{closed neighborhood}, denoted by $N_G[v]$, equals $N_G(v) \cup \{v\}$. As usual, the graph obtained from $G$ by removing the vertex $v$ (and all the edges incident with it) will be denoted by $G-v$. Given a set $S\subseteq V(G)$, its \emph{open neighborhood} is the set $N_G(S)= \cup_{v\in S} N_G(v)$, and its \emph{closed neighborhood} is the set $N_G[S]= N_G(S)\cup S$. A vertex $v$ is called \emph{universal} if $N_G[v]=V(G)$. By $G[S]$ we denote the subgraph of $G$ induced by $S$.

A set $S\subseteq V(G)$ is a \emph{dominating set} of $G$ if $N_G[S]=V(G)$. The \emph{domination number} of $G$, denoted by $\gamma(G)$, is the minimum cardinality of a dominating set of $G$. A dominating set of $G$ with minimum cardinality is called a $\gamma(G)$-set. More information on domination in graphs can be found in the books \cite{Haynes1998a,Haynes1998}.
In the last two decades, dominating functions have been extensively studied. One of the reasons may be due to the fact that dominating functions generalize the concept of dominating sets. A function $f: V(G) \rightarrow \{0,1,\ldots \}$ on $G$ is said to be a \emph{dominating function} if for every vertex $v$ such that $f(v)=0$, there exists a vertex $u\in N(v)$, such that $f(u)\geq 1$. If we restrict ourselves to the case of functions $f: V(G) \rightarrow \{0,1,2 \}$, then we observe that  $f$ generates three sets $V_0$, $V_1$ and $V_2$; where $V_i=\{v\in V(G) : f(v)=i\}$ for $i\in\{0,1,2\}$. In such a sense, we will write $f(V_0,V_1,V_2)$ to refer to the function $f$. Given a set $S\subseteq V(G)$, $f(S)=\sum_{v\in S}f(v)$. The \emph{weight} of $f$ is $\omega(f)=f(V(G))=|V_1|+2|V_2|$.

The theory of Roman dominating functions is one of the most studied topics within the theory of dominating functions in graphs. Roman dominating functions were formally defined by Cockayne et al. \cite{CDH04} motivated, in part, by a paper of Ian Stewart entitled ``Defend the Roman Empire" \cite{St99}. A \textit{Roman dominating function} (RDF) on a graph $G$ is a function $f(V_0,V_1,V_2)$ such that for every vertex $v\in V_0$, there exists a vertex $u\in N(v)\cap V_2$. The \textit{Roman domination number} of $G$, denoted by $\gamma_R(G)$, is the minimum weight $\omega(f)=\sum_{v\in V(G)}f(v)$  among all  RDFs $f$ on $G$. Some results on Roman domination in graphs can be found for example, in \cite{Chambers2009,CDH04,Favaron2009R,Liu2012a,IPL-1}.

Moreover, one of the most attractive research approaches within the domination theory in graphs is the study of domination-related parameters in product graphs. As expected, some of the researches concerning Roman dominating functions are related to product graphs. In particular, we cite the following works: lexicographic product graphs \cite{CabreraPerfectRomLexi,Roman-lexicographic-2012}; Cartesian and strong product graphs \cite{Yero,YeroJA2013}, direct product of paths and cycles \cite{klobucar2014,klobucar2015}, rooted product graphs \cite{Ismael-rooted-domination}, and corona product graphs \cite{Yero-K-RA}.

It is now our goal to continue with the study of this parameter in two of the product graphs mentioned above. In Section \ref{SectionDirect} we obtain tight bounds for the Roman domination number of direct product graphs. In Section \ref{SectionRooted} we provide closed formulas for the Roman domination number of rooted product graphs, and characterize the graphs reaching these expressions. We end this present section with some terminology on invariants related to domination which will be further used.

A natural lower bound for $\gamma(G)$ is the \emph{packing number} of $G$. A set $A\subseteq V(G)$ is called a \emph{packing set} (or simply a \emph{packing}) if any two distinct vertices $x,y\in A$ satisfy that $N[x]\cap N[y]=\emptyset$, that is, the closed neighborhoods of vertices in $A$ have pairwise empty intersections. The cardinality of a largest packing in $G$ is called the \emph{packing number} and is denoted by $\rho(G)$. Any packing of cardinality $\rho(G)$ is called a $\rho(G)$-set.

Related to packing sets, the notion of an \emph{open packing} in a graph $G$ comes by using open neighborhoods instead of closed neighborhoods. That is, a set $B$ is an \emph{open packing}, if open neighborhoods centered in vertices of $B$ have pairwise empty intersection. The cardinality of a largest open packing in $G$ is called the \emph{open packing number} of $G$ and is denoted by $\rho_o(G)$. An open packing of cardinality $\rho_o(G)$ is simply called a $\rho_o(G)$-set.

The open packing number is a natural lower bound for the \emph{total domination number} $\gamma_t(G)$. This is the minimum cardinality of a set $D\subseteq V(G)$, called a \emph{total dominating set}, where every vertex of $G$ has a neighbor in $D$. A total dominating set of cardinality $\gamma_t(G)$ is called a $\gamma_t(G)$-set as usual.

The last invariant we mention here is the \emph{total Roman domination number} $\gamma_{tR}(G)$. An RDF $f(V_0,V_1,V_2)$ is called a total Roman dominating function (TRDF) if $G[V_1\cup V_2]$ has no isolated vertices. The minimum weight $\omega(f)=|V_1|+2|V_2|$ among all TRDFs $f(V_0,V_1,V_2)$ is called the \emph{total Roman domination number} $\gamma_{tR}(G)$ on $G$. Again, we simply call a TRDF $f$ of weight $\gamma_{tR}(G)$ as a $\gamma_{tR}(G)$-function.

For some extra information (main results, open problems, etc.) on several domination related invariants, including the above mentioned ones, we suggest the recent books \cite{Haynes2020,Haynes2020-a,Henning2013}.

\section{Direct product graphs}\label{SectionDirect}

Let $G$ and $H$ be two graphs. Their direct product $G\times H$ is a graph with vertex set $V(G)\times V(H)$ and two vertices $(u,v)$ and $(u',v')$ are adjacent in $G\times H$ if $uu'\in E(G)$ and $vv'\in E(H)$. The direct product belongs to the so-called four standard graph products, and it is the only one whose edges project to edges to both factors. On the other hand, direct product seems to be quite elusive from many perspectives. Let us mention connectivity, where it can occur that $G\times H$ is disconnected even when both factors $G$ and $H$ are connected. This happens when both $G$ and $H$ are bipartite as shown in \cite{Weic}, see also \cite{HaIK}.

We start with Roman domination in direct products of complete graphs that yields a palette of sharp examples for the bounds that follows.

\begin{proposition}\label{complete}
For integers $r$ and $t$ we have
\begin{equation*}\label{dist}
\gamma_{R}(K_r\times K_t)=\left\{
\begin{array}{ll}
4; & \mbox{if $\;t\geq r=2$},\\
5; & \mbox{if $\;t\geq r=3$},\\
6; & \mbox{if $\;t\geq r>3$}.\\
\end{array}%
\right.
\end{equation*}%
\end{proposition}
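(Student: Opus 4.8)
The plan is to combine explicit constructions for the upper bounds with a case analysis on the number of vertices of positive weight for the lower bounds. First I would record the basic structural fact: identifying $V(K_r\times K_t)$ with $\{1,\dots,r\}\times\{1,\dots,t\}$, two vertices $(i,j)$ and $(i',j')$ are adjacent precisely when $i\neq i'$ and $j\neq j'$. Consequently, a vertex of weight $2$ placed at $(a,b)$ dominates every vertex except those lying in its row $a$ or its column $b$; equivalently, a weight-$0$ vertex $(i,j)$ fails to be Roman dominated if and only if every vertex of $V_2$ lies in row $i$ or in column $j$. Since $K_r\times K_t\cong K_t\times K_r$, I may assume $r\le t$ throughout, and this \emph{cross} description of the undominated region will drive every subsequent count.

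For the upper bounds I would exhibit one RDF in each regime. When $r=2$, assigning weight $2$ to $(1,1)$ and $(2,1)$ leaves only the vertices of column $1$ undominated, and these two vertices already carry weight $2$, giving an RDF of weight $4$. When $r=3$, the same two weight-$2$ vertices dominate every vertex outside column $1$, and the single remaining vertex $(3,1)$ of that column can simply be given weight $1$, yielding weight $5$. When $r>3$ I would place weight $2$ on the three ``diagonal'' vertices $(1,1),(2,2),(3,3)$; a short check of the criterion above shows that no weight-$0$ vertex can have all three of these simultaneously in its row or column, so this is an RDF of weight $6$.

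For the lower bounds I would argue by contradiction, assuming an RDF $f(V_0,V_1,V_2)$ of smaller weight and splitting into cases according to $|V_2|$ (with the budget automatically restricting which values of $|V_2|$ can occur; for $r=2$ it already forbids $|V_2|=2$). If $V_2=\emptyset$, then no weight-$0$ vertex is dominated, forcing every vertex to have positive weight and hence total weight at least $rt$, far above the target. If $|V_2|=1$, the unique weight-$2$ vertex leaves its whole cross undominated, i.e.\ the $t+r-2$ vertices of its row and column other than itself, each of which must then lie in $V_1$; this again forces the weight above the target. The decisive case is $|V_2|=2$, with the two weight-$2$ vertices at $(a_1,b_1)$ and $(a_2,b_2)$: here I would distinguish whether they share a row, share a column, or lie in distinct rows and columns, and count the weight-$0$ vertices that remain undominated --- respectively the $t-2$ other vertices of that row, the $r-2$ other vertices of that column, or exactly the two opposite corners $(a_1,b_2)$ and $(a_2,b_1)$. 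In each subcase the number of leftover vertices exceeds the number of weight-$1$ labels the budget allows, which is the contradiction.

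I expect the $|V_2|=2$ case to be the main obstacle, since it is the only place where the three regimes genuinely diverge: for $r=3$ a column leftover of size $r-2=1$ is exactly what makes weight $5$ achievable yet weight $4$ impossible, whereas for $r>3$ every configuration of two weight-$2$ vertices leaves at least two undominated vertices, which is what pushes the value up to $6$. I would therefore take care to keep the counting uniform in $r$ and $t$ and to verify the boundary values $t=r$ separately, so that the bookkeeping matches the three branches of the stated formula cleanly.
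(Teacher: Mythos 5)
Your proposal is correct and follows essentially the same strategy as the paper: explicit constructions for the upper bounds and, for the lower bounds, a case analysis on $|V_2|$ that counts the vertices nonadjacent to every vertex of $V_2$ (your ``cross'' description is exactly the paper's observation that $(i,j)$ and $(i',j')$ are nonadjacent precisely when they share a coordinate). The only cosmetic differences are that the paper uses the L-shaped set $\{(1,1),(2,1),(1,2)\}$ instead of your diagonal for $r>3$, and phrases the $|V_2|=2$ subcases slightly less uniformly; both versions go through.
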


\begin{proof}
Let $V(K_r)=\{v_1,\dots,v_r\}$ and $V(K_t)=\{u_1,\dots,u_t\}$. Let first $t\geq r=2$. In this case we have $K_r\times K_t\cong K_{t,t}-M$ for a perfect matching $M=\{(v_1,u_i)(v_2,u_i):i\in\{1,\dots,t\}\}$. Clearly, for any $i\in\{1,\dots,t\}$, the function $f(V_0,V_1,V_2)$, defined by $V_2=\{(v_1,u_i),(v_2,u_i)\}$, $V_1=\emptyset $ and $V_0=V(K_r\times K_t)\setminus V_2$, is an RDF on $K_2\times K_t$ and so, $\gamma_{R}(K_2\times K_t)\leq 4$. Suppose that $\gamma_{R}(K_2\times K_t)< 4$, i.e, $\gamma_{R}(K_2\times K_t)=3$. Let $g(W_0,W_1,W_2)$ be a $\gamma_{R}(K_2\times K_t)$-function. Then there is either one vertex in $W_2$ and one in $W_1$; or three vertices in $W_1$ and no in $W_2$. The last option is not possible since $K_2\times K_t$ contains at least four vertices. Also, the first possibility leads to a contradiction because there are at least two vertices nonadjacent to the unique vertex $(v_i,u_j)$ in $W_2$. Therefore, $\gamma_{R}(K_2\times K_t)=4$.

Let now $t\geq r=3$. We set $V_2=\{(v_1,u_1),(v_2,u_1)\}$, $V_1=\{(v_3,u_1)\}$ and $V_0=V(K_3\times K_t)\setminus (V_1\cup V_2)$. It is easy to see that $f(V_0,V_1,V_2)$ is an RDF on $K_3\times K_t$ and $\gamma_{R}(K_3\times K_t)\leq 5$ follows. Suppose that $\gamma_{R}(K_3\times K_t)<5$ and let $g(W_0,W_1,W_2)$ be a $\gamma_{R}(K_3\times K_t)$-function. Clearly, $W_2\neq \emptyset$ because there are at least nine vertices in $K_3\times K_t$. Every vertex is nonadjacent to exactly $t+1\geq 4$ other vertices, and therefore $|W_2|=2$. For any pair of vertices $(v_i,u_j),(v_k,u_\ell)$ there exists at least one vertex nonadjacent to both. Indeed, if $j=\ell$, then $(v_m,u_j)$ is such for $\{i,k,m\}=\{1,2,3\}$ and if $j\neq\ell$, then $(v_i,u_\ell)$ and $(v_k,u_j)$ are such. In both cases we obtain a contradiction, and the equality $\gamma_{R}(K_3\times K_t)= 5$ holds.

Finally, let $t\geq r>3$. It is easy to check that the function $f(V_0,V_1,V_2)$, defined by $V_2=\{(v_1,u_1),(v_2,u_1),(v_1,u_2)\}$, $V_1=\emptyset $ and $V_0=V(K_r\times K_t)\setminus (V_1\cup V_2)$, is an RDF on $K_r\times K_t$, and we have $\gamma_R(K_r\times K_t)\leq 6$. If $\gamma_R(K_r\times K_t)<6$, then there exists an RDF $g(W_0,W_1,W_2)$ with $\omega(g)\le 5$ and $|W_2|\leq 2$. Clearly, $|W_2|>0$ since $K_r\times K_t$ contains at least $16$ vertices. If $|W_2|=1$, then we obtain a contradiction because there exist at least six vertices in $V(K_r\times K_t)$ being nonadjacent to the unique vertex in $W_2$. If $W_2=\{(v_i,u_j),(v_k,u_\ell)\}$, then there exist at least two vertices nonadjacent to both. More detailed, if $i=k$, then there are at least two more vertices $(v_i,u_m),(v_i,u_p)$ such that $|\{j,\ell,m,p\}|=4$. A symmetrical argument works when $j=\ell$. When $i\neq k$ and $j\neq \ell$, the vertices $(v_i,u_\ell),(v_k,u_j)$ are not adjacent to vertices in $W_2$, a final contradiction. Hence, the equality $\gamma_R(K_r\times K_t)=6$ holds when $t\geq r>3$.
\end{proof}

We next describe several lower and upper bounds for $\gamma_{R}(G\times H)$ in terms of different parameters on the factors of $G$ and $H$.

\begin{theorem}\label{teo-upper-bound-1}
For any graphs $G$ and $H$ with no isolated vertex,
$$\max\{\rho(G)\gamma_{R}(H), \rho(H)\gamma_{R}(G)\}\leq \gamma_{R}(G\times H)\leq \min\{2\gamma(G)\gamma_{tR}(H), 2\gamma(H)\gamma_{tR}(G)\}.$$
\end{theorem}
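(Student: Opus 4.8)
The plan is to prove the two inequalities separately, since the lower and upper bounds rely on essentially opposite ideas: the lower bound exploits how a packing in one factor forces many ``independent'' copies of an RDF obligation, while the upper bound constructs an explicit RDF on the product from optimal objects on the factors.

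**Upper bound.** By symmetry it suffices to show $\gamma_R(G\times H)\le 2\gamma(H)\gamma_{tR}(G)$. Let $D$ be a $\gamma(H)$-set and let $g(V_0^g,V_1^g,V_2^g)$ be a $\gamma_{tR}(G)$-function. The idea is to place, over each ``column'' $G\times\{d\}$ for $d\in D$, a doubled version of $g$: define $f$ on $G\times H$ by giving $(u,d)$ the value $2g(u)$ for every $d\in D$ and $0$ elsewhere. I would then verify that $f$ is an RDF. A vertex $(u,h)$ with $f(u,h)=0$ falls into two cases. If $g(u)\ge 1$ but $h\notin D$, then since $D$ dominates $H$ there is $d\in D$ with $hd\in E(H)$; and because $g$ is a \emph{total} RDF, $u$ has a neighbor $u'$ in $G$ with $g(u')\ge 1$, hence $f(u',d)=2g(u')\ge 2$ and $(u',d)$ is a product-neighbor of $(u,h)$. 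If instead $g(u)=0$, then $u$ has a $G$-neighbor $u'$ with $g(u')=2$, and again using domination of $h$ by some $d\in D$ we get the product-neighbor $(u',d)$ with $f(u',d)=4\ge 2$. The totality of $g$ is exactly what guarantees a suitable neighbor even in the columns indexed by $D$, and computing $\omega(f)=|D|\cdot 2\,\omega(g)=2\gamma(H)\gamma_{tR}(G)$ finishes this direction.

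**Lower bound.** Again by symmetry it suffices to prove $\rho(H)\gamma_R(G)\le\gamma_R(G\times H)$. Let $f$ be a $\gamma_R(G\times H)$-function and let $A$ be a $\rho(H)$-set. The plan is to show that the restriction of $f$ to each slice $G\times N_H[a]$, for $a\in A$, induces (after projecting to $G$) an RDF on $G$ of weight at least $\gamma_R(G)$, and that these slices are vertex-disjoint so their weights add up. For a fixed $a\in A$, define $f_a\colon V(G)\to\{0,1,2\}$ by $f_a(u)=\max\{\,f(u,h): h\in N_H[a]\,\}$, or more robustly $f_a(u)=\min\{2,\sum_{h\in N_H[a]}f(u,h)\}$. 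I would check $f_a$ is an RDF on $G$: if $f_a(u)=0$ then $f(u,a)=0$, so $(u,a)$ has a product-neighbor $(u',h')$ with $f(u',h')=2$, which forces $u'u'\in E(G)$ with $h'\in N_H(a)\subseteq N_H[a]$, hence $f_a(u')=2$ and $u'\in N_G(u)$. Thus $\omega(f_a)\ge\gamma_R(G)$.

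**The main obstacle**, and the step I would handle most carefully, is summing these weights without double-counting. The sets $N_H[a]$ for distinct $a\in A$ are pairwise disjoint precisely because $A$ is a packing, so the vertex sets $V(G)\times N_H[a]$ are pairwise disjoint subsets of $V(G\times H)$. Hence
\[
\gamma_R(G\times H)=\omega(f)\ge\sum_{a\in A}\ \sum_{u\in V(G)}\ \sum_{h\in N_H[a]} f(u,h)\ \ge\ \sum_{a\in A}\omega(f_a)\ \ge\ \rho(H)\,\gamma_R(G),
\]
where the middle inequality uses $\sum_{h\in N_H[a]}f(u,h)\ge f_a(u)$, which holds for the $\min\{2,\cdot\}$ definition. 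The delicate point is precisely this middle step: the packing guarantees disjointness so that the first inequality loses nothing through overlap, while the $\min$-truncation in the definition of $f_a$ could in principle discard weight; I would confirm that the slice-by-slice lower bound $\omega(f_a)\ge\gamma_R(G)$ still goes through with the truncated values, since an RDF needs only a value-$2$ vertex in the neighborhood and truncation never lowers a $0$. Finally, the outer $\max$ and $\min$ in the statement follow by symmetry of the product, applying each argument with the roles of $G$ and $H$ interchanged.
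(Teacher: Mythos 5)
Your lower bound argument is correct and is essentially the paper's own proof: the paper fixes a packing in one factor and, for each packing vertex, projects $f$ over the closed neighborhood onto the other factor via a maximum, then sums using the disjointness of the closed neighborhoods. Your $\min\{2,\sum(\cdot)\}$ variant and the swap of the roles of $G$ and $H$ change nothing essential, and your verification that each projected function is an RDF is the same as the paper's.

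The upper bound, however, has a genuine gap. You place positive values only on the columns $V(G)\times D$ for a $\gamma(H)$-set $D$, and in the case $g(u)=0$ you claim that ``domination of $h$ by some $d\in D$'' yields a product-neighbor $(u',d)$ with positive value. This fails when $h\in D$ and $h$ has no neighbor inside $D$: a dominating set only guarantees $N_H[h]\cap D\neq\emptyset$, so $h$ may be dominated only by itself, and $(u',h)$ is not a neighbor of $(u,h)$ in the direct product since $H$ has no loops. Concretely, take $H=K_{1,n}$ with center $c$ and $D=\{c\}$: for any $u$ with $g(u)=0$, every neighbor of $(u,c)$ lies in $V(G)\times(V(H)\setminus D)$, where your $f$ vanishes, so $f$ is not an RDF. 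This is exactly the difficulty the paper's construction is built around: it extends $D$ to a total dominating set $W\supseteq D$ of $H$ with $|W|\le 2|D|$, assigns the value $2$ on $V_2\times W$ (so that even vertices of $D$ have a value-$2$ neighbor in their column set) but only on $V_1\times D$ for the lighter part, and the bound then comes out of the inequality $2(|V_2||W|+|V_1||D|)\le 2|D|(2|V_2|+|V_1|)=2\gamma(H)\gamma_{tR}(G)$. Simply replacing your $D$ by a total dominating set throughout would repair the domination but only yields $2\gamma_t(H)\gamma_{tR}(G)$, which is weaker than the claimed bound. (Separately, your $f$ takes the value $4=2g(u)$ at some vertices, so it is not literally a function into $\{0,1,2\}$; that is cosmetic and fixable by capping at $2$, unlike the issue above.)
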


\begin{proof}
First, we prove the lower bound. Let $f$ be a $\gamma_{R}(G\times H)$-function and $S=\{u_1,\dots,u_{\rho(G)}\}$  a $\rho(G)$-set.
For any $i\in \{1,\dots,\rho(G)\}$, we construct a function $h_i$ on $H$ as follows. For every $v\in V(H)$, let $h_i(v)=\max\{f(u,v)\,:\,u\in N_G[u_i]\}$.
We claim that $h_i$ is an RDF on $H$. Let $v\in V(H)$ such that $h_i(v)=0$. Notice that every vertex $(u,v)\in N_G[u_i]\times \{v\}$ satisfies that $f(u,v)=0$. In particular, for the vertex $(u_i,v)$, there exists $(u_i',v')\in N_{G\times H}(u_i,v)$ such that $f(u_i',v')=2$. So, there exists $v'\in N_H(v)$ with $h_i(v')=2$.
Consequently, $h_i$ is an RDF on $H$, which implies that $\gamma_{R}(H)\le \omega(h_i)\le f(N_G[u_i]\times V(H))$. Thus,
$$\gamma_{R}(G\times H) \ge \sum_{i=1}^{\rho(G)}f(N_G[u_i]\times V(H))\ge \sum_{i=1}^{\rho(G)} \gamma_{R}(H)
                          = \rho(G)\gamma_{R}(H).$$
By the symmetry of $G\times H$, it is also satisfied that $\gamma_{R}(G\times H)\ge \rho(H)\gamma_{R}(G)$, which completes  the proof of the lower bound.

Next, we proceed to the upper bound.
Let $f'(V_0,V_1,V_2)$ be a $\gamma_{tR}(G)$-function and $D$ a $\gamma(H)$-set. Now, we define $W\subseteq V(H)$ as a set of  minimum cardinality among all total dominating sets $W'$ such that $D\subseteq W'$. Notice that $|W|\leq 2|D|$.
We consider a function $g(W_0,\emptyset, W_2)$ on $G\times H$ as follows. If $(x,y)\in (V_2\times W)\cup (V_1\times D)$, then $g(x,y)=2$, and $g(x,y)=0$ otherwise. We claim that $g$ is an RDF on $G\times H$. Let $(u,v)\in W_0$ and distinguish the next two cases.

\vspace{.1cm}

\noindent
Case 1. $u\in V_0$. In this case,  $N_G(u)\cap V_2\neq \emptyset$ and also,  $N_H(v)\cap W\neq \emptyset$ as $W$ is a total dominating set of $H$. Hence, $N_{G\times H}(u,v)\cap W_2\neq \emptyset$.

\vspace{.1cm}

\noindent
Case 2. $u\in V_1\cup V_2$. In this case, $N_G(u)\cap (V_1\cup V_2)\neq \emptyset$ and also,  $N_H(v)\cap D\neq \emptyset$ as $D$ is a dominating set of $H$. Hence, $N_{G\times H}(u,v)\cap W_2\neq \emptyset$.

\vspace{.1cm}

From the previous cases,  we deduce that $g$ is an RDF on $G\times H$, as required. Therefore,
\begin{align*}
  \gamma_{R}(G\times H) & \le \omega(g) \\
                         & = 2|W_2|\\
                         & = 2(|V_2||W|+|V_1||D|)\\
                         & \leq 2(2|V_2||D|+|V_1||D|)\\
                         & = 2|D|(2|V_2|+|V_1|)\\
                         & = 2\gamma(H)\gamma_{tR}(G).
\end{align*}
By the symmetry of $G\times H$, it is also satisfied that $\gamma_{R}(G\times H)\leq 2\gamma(G)\gamma_{tR}(H)$, which completes the proof.
\end{proof}

The upper bound of Theorem \ref{teo-upper-bound-1} is sharp for instance for $K_r\times K_t$, $t\geq r>3$ by Proposition \ref{complete} and since $\gamma(K_r)=\gamma(K_t)=1$ and $\gamma_{tR}(K_r)=\gamma_{tR}(K_t)=3$. We are not aware of any example that is sharp for the lower bound of Theorem \ref{teo-upper-bound-1}. Moreover, in most cases it seems that one could even add a factor two and the bound is still valid. However this is not true in all cases. In particular, $K_2\times C_5\cong C_{10}$ and we have $\gamma_R(K_2\times C_5)=7$ while we get $\max\{\rho(K_2)\gamma_{R}(C_5), \rho(C_5)\gamma_{R}(K_2)\}=4$.

\begin{theorem}\label{up-bounds}
The following statements hold for any graph $H$ with $\gamma_t(H)=\gamma(H)$.
\begin{enumerate}
\item[{\rm (i)}] For any $\gamma_{tR}(G)$-function $f(V_0,V_1,V_2)$,
$$\gamma_{R}(G\times H)\leq 2\gamma(H)(\gamma_{tR}(G)-|V_2|).$$
\item[{\rm (ii)}] If there exists a $\gamma_{tR}(G)$-function  $f(V_0,V_1,V_2)$ such that $V_2$ is a dominating set of $G$, then $$\gamma_{R}(G\times H)\leq 2\gamma(H)(\gamma_{tR}(G)-\gamma(G)).$$
\end{enumerate}
\end{theorem}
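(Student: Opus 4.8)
The plan is to adapt the construction from the proof of Theorem~\ref{teo-upper-bound-1} (the upper bound), refining it to exploit the extra hypothesis $\gamma_t(H)=\gamma(H)$. In that earlier proof, the key inefficiency was that we enlarged a $\gamma(H)$-set $D$ into a total dominating set $W$ with $|W|\le 2|D|$, which is exactly what forced the factor $2|V_2||D|$ rather than $|V_2||D|$. Here the hypothesis $\gamma_t(H)=\gamma(H)$ lets us take a single set $D$ that is simultaneously a dominating set and a total dominating set of $H$ with $|D|=\gamma(H)$. This collapses the two sets $W$ and $D$ into one, and the whole savings comes from that.

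Let me first sketch part~(i). I would fix a $\gamma_{tR}(G)$-function $f(V_0,V_1,V_2)$ and a set $D\subseteq V(H)$ that is at once a $\gamma(H)$-set and a total dominating set (possible since $\gamma_t(H)=\gamma(H)$). Define $g$ on $G\times H$ by assigning weight $2$ to every vertex of $(V_2\times D)\cup (V_1\times D)=(V_1\cup V_2)\times D$ and weight $0$ elsewhere. To verify $g$ is an RDF, take $(u,v)$ with $g(u,v)=0$; if $u\in V_0$, then since $f$ is a (total) RDF on $G$ there is $u'\in N_G(u)\cap V_2$, and since $D$ totally dominates $H$ there is $v'\in N_H(v)\cap D$, so $(u',v')\in N_{G\times H}(u,v)$ carries weight $2$. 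If instead $u\in V_1\cup V_2$ but $v\notin D$, then $N_G(u)\cap(V_1\cup V_2)\ne\emptyset$ because $G[V_1\cup V_2]$ has no isolated vertex, and $N_H(v)\cap D\ne\emptyset$ because $D$ dominates $H$ (and $v\notin D$), again producing a neighbor of weight $2$. Thus $g$ is an RDF, and its weight is $2|D|\,|V_1\cup V_2|=2\gamma(H)(|V_1|+|V_2|)$. Writing $|V_1|+|V_2|=(|V_1|+2|V_2|)-|V_2|=\gamma_{tR}(G)-|V_2|$ gives the bound.

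For part~(ii) I would use the stronger hypothesis that $V_2$ itself dominates $G$. The idea is that we no longer need to place weight on the whole $(V_1\cup V_2)\times D$; it should suffice to weight $V_2\times D$ only, because a vertex $(u,v)$ with $u\in V_1$ can now be dominated within $G$ by a vertex of $V_2$. Concretely, set $g$ to be $2$ on $V_2\times D$ and $0$ elsewhere, and re-run the verification: any $(u,v)$ with $g(u,v)=0$ has $u\in V_0\cup V_1$, and since $V_2$ dominates $G$ there is $u'\in N_G(u)\cap V_2$ (for $u\in V_1$ we must confirm $u$ has a neighbor in $V_2$, not merely that $V_2\cup\{u\}$ dominates, which is where the domination hypothesis on $V_2$ is used), together with $v'\in N_H(v)\cap D$ from total domination of $H$. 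The weight is then $2|D|\,|V_2|=2\gamma(H)|V_2|$, and since $V_2$ is a dominating set we have $|V_2|\ge\gamma(G)$, so $2\gamma(H)|V_2|\le 2\gamma(H)\gamma_{tR}(G)-2\gamma(H)\gamma(G)$ does not immediately follow; instead I should bound via $\gamma_{tR}(G)=|V_1|+2|V_2|\ge 2|V_2|$ is the wrong direction, so the clean route is to observe $2|V_2|=\gamma_{tR}(G)-|V_1|$ and relate $|V_1|$ to $\gamma(G)$.

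**The hard part will be** getting the arithmetic of part~(ii) to land exactly on $\gamma_{tR}(G)-\gamma(G)$. The subtlety is that weighting only $V_2\times D$ yields weight $2\gamma(H)|V_2|$, whereas the target is $2\gamma(H)(\gamma_{tR}(G)-\gamma(G))=2\gamma(H)(|V_1|+2|V_2|-\gamma(G))$; these agree precisely when $|V_2|=|V_1|+2|V_2|-\gamma(G)$, i.e. when $|V_1|+|V_2|=\gamma(G)$. So the real content is that $V_1\cup V_2$ can be taken to have cardinality exactly $\gamma(G)$: since $V_2$ dominates $G$, the set $V_1\cup V_2$ is a dominating set, giving $|V_1|+|V_2|\ge\gamma(G)$, and I must argue the reverse inequality holds for an appropriately chosen function, or else keep the $V_1\times D$ contribution and show the total still telescopes to $\gamma_{tR}(G)-\gamma(G)$. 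I expect the correct construction to assign $2$ to all of $(V_1\cup V_2)\times D$ as in part~(i), giving weight $2\gamma(H)(|V_1|+|V_2|)$, and then to invoke that $V_2$ being a dominating set forces $|V_1|+|V_2|\le\gamma_{tR}(G)-\gamma(G)$ via $\gamma(G)\le|V_2|$ and $|V_1|+|V_2|=\gamma_{tR}(G)-|V_2|\le\gamma_{tR}(G)-\gamma(G)$; reconciling this chain of inequalities cleanly is the step I would check most carefully.
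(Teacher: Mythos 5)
Your proposal is correct and takes essentially the same route as the paper: part (i) is the identical construction (weight $2$ on $(V_1\cup V_2)\times D$ for a set $D$ that is simultaneously a $\gamma(H)$-set and a total dominating set, verified exactly as in Theorem~\ref{teo-upper-bound-1}), and the inequality chain you hesitated over at the end of part (ii) is precisely the paper's argument and is valid: $V_2$ being a dominating set gives $|V_2|\ge\gamma(G)$, hence $|V_1|+|V_2|=\gamma_{tR}(G)-|V_2|\le\gamma_{tR}(G)-\gamma(G)$, so (ii) is an immediate consequence of (i). The alternative you considered (weighting only $V_2\times D$) is rightly abandoned; besides the arithmetic mismatch, it is not even an RDF in general, since a vertex $(u,v)$ with $u\in V_2$ and $v\notin D$ need not have a neighbor in $V_2\times D$.
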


\begin{proof}
Let $D$ be a $\gamma_t(H)$-set. So, $D$ is also a $\gamma(H)$-set. From $f$ and $D$, we define $g(W_0,\emptyset,W_2)$ on $G\times H$ as follows: $W_2=(V_1\cup V_2)\times D$ and $W_0=V(G\times H)\setminus W_2$. Observe that $g$ is an RDF on $G\times H$ by the same reasons as in the proof of Theorem \ref{teo-upper-bound-1}. Hence,
\begin{align*}
  \gamma_{R}(G\times H) & \le \omega(g) \\
                         & = 2|W_2|\\
                         & = 2(|V_2||D|+|V_1||D|)\\
                         & = 2|D|(|V_2|+|V_1|)\\
                         & = 2\gamma(H)(\gamma_{tR}(G)-|V_2|),
\end{align*}
which completes the proof of (i).

Finally, (ii) is an immediate consequence of (i).
\end{proof}

Both bounds of Theorem \ref{up-bounds} are sharp. One can observe this for the graph $P_4\times P_4$. Here $\gamma_t(P_4)=2=\gamma(P_4)$ and $\gamma_{tR}(P_4)=4$ where there exists a TRDF $f(V_0,V_1,V_2)$ on $P_4$ with $|V_2|=2$, and by Theorem \ref{up-bounds}, we get $\gamma_{R}(P_4\times P_4)\leq 8$. The equality follows by  \cite[Theorem 5]{klobucar2015}.

We continue with another lower and upper bounds on $\gamma_R(G\times H)$. Before this, we need to give some well-known results.

\begin{theorem}\label{teo-known-bounds}
The following statements hold for any graphs $G$ and $H$ with no isolated vertex.
\begin{enumerate}
\item[{\rm (i)}] {\rm \cite{TRDF-First-2016}} $\gamma_R(G)\leq \gamma_{tR}(G)\leq 3\gamma(G)$.
\item[{\rm (ii)}] {\rm \cite{Chellali2015}} $\gamma_R(G)\geq \gamma_t(G)$.
\item[{\rm (iii)}] {\rm \cite{CabreraDirectTRDF}} $\gamma_{tR}(G\times H)\leq 2\gamma_t(G)\gamma_{t}(H)$.
\item[{\rm (iv)}] {\rm \cite{Rall2005}} $\gamma_{t}(G\times H)\geq \min\{\rho_o(G)\gamma_{t}(H),\rho_o(H)\gamma_{t}(G)\}$.
\end{enumerate}
\end{theorem}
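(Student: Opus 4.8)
Since this statement only assembles four results established elsewhere (as the citations indicate), my plan is to recall, for each item, the short argument one would give. For (i), the left inequality $\gamma_R(G)\le\gamma_{tR}(G)$ is immediate from the definitions: every total Roman dominating function is in particular a Roman dominating function, so the minimum weight taken over the smaller class of TRDFs cannot drop below $\gamma_R(G)$. For the right inequality $\gamma_{tR}(G)\le 3\gamma(G)$, I would start from a $\gamma(G)$-set $D$, assign the value $2$ to every vertex of $D$, and then repair the totality condition: since $G$ has no isolated vertex, each vertex of $D$ that is isolated in $G[D]$ has a neighbour, to which I assign the value $1$. The resulting function is a TRDF, because each added weight-$1$ vertex sits next to a weight-$2$ vertex and each weight-$2$ vertex now has a positive neighbour, and its weight is at most $2|D|+|D|=3\gamma(G)$.

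The delicate item is (ii), namely $\gamma_R(G)\ge\gamma_t(G)$. The natural idea is to convert a $\gamma_R(G)$-function $f(V_0,V_1,V_2)$ into a total dominating set of cardinality at most $\omega(f)=|V_1|+2|V_2|$. I would first record two normalizations that a minimum-weight RDF may be assumed to satisfy: no vertex of $V_1$ has a neighbour in $V_2$ (otherwise it could be reassigned the value $0$, lowering the weight), and, among all minimum RDFs, $|V_2|$ is chosen as large as possible. Starting from $V_1\cup V_2$ one already totally dominates $V_0$ through $V_2$; the difficulty is to totally dominate $V_1\cup V_2$ itself, which forces adjoining one auxiliary neighbour for each vertex isolated in $G[V_1\cup V_2]$. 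The main obstacle is exactly this bookkeeping: a priori the repairs could cost up to $|V_1|+|V_2|$ extra vertices, overshooting the budget. The role of the extremal choice of $f$ is to guarantee that the auxiliary vertices can be selected among neighbours of $V_2$ and shared, so that their total number does not exceed $|V_2|$; a small example such as $P_5$ shows that an arbitrary minimum RDF need not work and that this extremal choice is genuinely needed.

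For (iii), $\gamma_{tR}(G\times H)\le 2\gamma_t(G)\gamma_t(H)$, I would fix $\gamma_t$-sets $D_G$ and $D_H$ and define the function on $G\times H$ that assigns $2$ to every vertex of $D_G\times D_H$ and $0$ elsewhere, of weight $2|D_G||D_H|=2\gamma_t(G)\gamma_t(H)$. This is a TRDF: if $(u,v)$ has value $0$, total domination of $G$ and of $H$ yields $u'\in N_G(u)\cap D_G$ and $v'\in N_H(v)\cap D_H$, so the weight-$2$ vertex $(u',v')$ is a neighbour of $(u,v)$ in $G\times H$; the same computation applied to a vertex of $D_G\times D_H$ shows that the positive vertices induce no isolated vertex.

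Finally, (iv) is a lower bound dual to the one proved in Theorem \ref{teo-upper-bound-1}. Given a $\gamma_t(G\times H)$-set $S$ and an open packing $B$ of $G$ with $|B|=\rho_o(G)$, for each $b\in B$ the projection to $H$ of $S\cap(N_G(b)\times V(H))$ is a total dominating set of $H$: every $(b,v)$ must have a neighbour in $S$, which forces a neighbour of $v$ into that projection, so it has at least $\gamma_t(H)$ vertices. Since the open neighbourhoods $N_G(b)$ are pairwise disjoint for $b\in B$, the corresponding pieces of $S$ are disjoint and their sizes sum to at least $\rho_o(G)\gamma_t(H)$; the symmetric argument gives $\rho_o(H)\gamma_t(G)$, and taking the minimum of the two yields the claim. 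I expect (ii) to be the only genuine obstacle, the other three being short constructions or direct consequences of the definitions.
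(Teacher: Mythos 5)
The paper itself gives no proof of this theorem: all four items are imported from the literature with citations, so there is no in-paper argument to compare against and the only question is whether your reconstructions are sound. Your arguments for (i), (iii) and (iv) are correct and are the standard ones; in (iv) your projection argument in fact gives the stronger lower bound $\max\{\rho_o(G)\gamma_t(H),\rho_o(H)\gamma_t(G)\}$, from which the stated minimum follows.

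The genuine gap is in (ii). Your plan is to take $V_1\cup V_2$ as the core of the total dominating set and to adjoin auxiliary vertices, asserting that the extremal choice of $f$ (maximizing $|V_2|$) lets the auxiliaries be shared so that at most $|V_2|$ of them are needed. This is false. Let $G$ be the spider with centre $c$, middle vertices $u_1,\dots,u_k$ and leaves $w_1,\dots,w_k$, where $cu_i,u_iw_i\in E(G)$ and $k\geq 3$. A short case analysis shows $\gamma_R(G)=k+2$ and that the unique $\gamma_R(G)$-function has $V_2=\{c\}$ and $V_1=\{w_1,\dots,w_k\}$, so no extremal choice is available. All $k+1$ vertices of $V_1\cup V_2$ are isolated in $G[V_1\cup V_2]$, and since $u_i$ is the only neighbour of $w_i$, any total dominating set that contains $V_1$ must contain all of $u_1,\dots,u_k$; your construction therefore has size at least $2k+1>k+2=\gamma_R(G)$. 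The inequality itself survives (here $\{c,u_1,\dots,u_k\}$ is a total dominating set of size $k+1$), but the witness must \emph{discard} $V_1$ rather than keep it. The repair: normalize $f$ so that $|V_1|$ is minimum among $\gamma_R(G)$-functions (equivalently $|V_2|$ is maximum), which forces $V_1$ to be independent with all its neighbours in $V_0$; then set $D=V_2\cup A\cup B$, where $A$ contains one neighbour of each vertex of $V_2$ that is isolated in $G[V_2]$, and $B$ contains one neighbour $b_w\in V_0$ of each $w\in V_1$. Every vertex of $V_0$ (hence of $A\cup B$) has a neighbour in $V_2$ by the RDF condition, every vertex of $V_2$ has a neighbour in $V_2\cup A$, and every $w\in V_1$ has the neighbour $b_w\in B$, so $D$ is a total dominating set with $|D|\leq |V_2|+|A|+|B|\leq 2|V_2|+|V_1|=\gamma_R(G)$.
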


\begin{theorem}\label{easy}
For any graphs $G$ and $H$ with no isolated vertex,
$$\min\{\rho_o(G)\gamma_{t}(H),\rho_o(H)\gamma_{t}(G)\}\leq \gamma_{R}(G\times H)\leq \min\{2\gamma_t(G)\gamma_{t}(H),6\gamma(G)\gamma(H)\}.$$
\end{theorem}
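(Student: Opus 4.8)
The plan is to derive both inequalities by chaining together results already established in the excerpt, since this statement is essentially a corollary of Theorems~\ref{teo-upper-bound-1} and~\ref{teo-known-bounds}. Before doing anything else, I would note that the hypotheses are met: if $G$ and $H$ have no isolated vertex, then for any $(u,v)\in V(G\times H)$ we may pick $u'\in N_G(u)$ and $v'\in N_H(v)$, so $(u',v')\in N_{G\times H}(u,v)$; hence $G\times H$ has no isolated vertex and all the quantities $\gamma_t(G\times H)$ and $\gamma_{tR}(G\times H)$ are well defined and the cited theorems apply.

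For the lower bound, I would first apply Theorem~\ref{teo-known-bounds}(ii) to the graph $G\times H$ itself, obtaining $\gamma_R(G\times H)\geq \gamma_t(G\times H)$. Then Theorem~\ref{teo-known-bounds}(iv) gives $\gamma_t(G\times H)\geq \min\{\rho_o(G)\gamma_t(H),\rho_o(H)\gamma_t(G)\}$. Combining these two inequalities immediately yields the claimed lower bound $\min\{\rho_o(G)\gamma_t(H),\rho_o(H)\gamma_t(G)\}\leq \gamma_R(G\times H)$.

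For the upper bound I would treat the two terms of the minimum separately. To reach the bound $2\gamma_t(G)\gamma_t(H)$, I would use Theorem~\ref{teo-known-bounds}(i) applied to the product, namely $\gamma_R(G\times H)\leq \gamma_{tR}(G\times H)$, and then Theorem~\ref{teo-known-bounds}(iii), which gives $\gamma_{tR}(G\times H)\leq 2\gamma_t(G)\gamma_t(H)$. To reach the bound $6\gamma(G)\gamma(H)$, I would instead start from the upper bound of Theorem~\ref{teo-upper-bound-1}, namely $\gamma_R(G\times H)\leq 2\gamma(G)\gamma_{tR}(H)$, and then estimate $\gamma_{tR}(H)\leq 3\gamma(H)$ using Theorem~\ref{teo-known-bounds}(i). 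This produces $\gamma_R(G\times H)\leq 6\gamma(G)\gamma(H)$. Taking the minimum of the two resulting upper bounds completes the argument.

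Since every step is a single direct application of a cited inequality, I do not expect a genuine obstacle here; the result is correctly labelled as \emph{easy}. The only point requiring a moment of care is the bookkeeping described in the first paragraph—checking that $G\times H$ inherits the ``no isolated vertex'' property so that the total domination and total Roman domination numbers are defined and the invoked theorems are legitimately applicable.
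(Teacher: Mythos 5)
Your proposal is correct and follows essentially the same route as the paper: the lower bound via Theorem~\ref{teo-known-bounds}~(ii) and~(iv), the first upper bound via~(i) and~(iii), and the second upper bound by combining Theorem~\ref{teo-upper-bound-1} with the estimate $\gamma_{tR}(H)\leq 3\gamma(H)$ from~(i). The additional check that $G\times H$ inherits the no-isolated-vertex property is a sensible (if implicit in the paper) piece of bookkeeping.
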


\begin{proof}
By Theorem \ref{teo-known-bounds} (ii) and (iv) we deduce the lower bound. The upper bound $\gamma_{R}(G\times H)\leq 2\gamma_t(G)\gamma_{t}(H)$ holds by Theorem \ref{teo-known-bounds} (i) and (iii).
Finally, to complete the proof of the upper bound we only need to combine Theorems \ref{teo-upper-bound-1} and \ref{teo-known-bounds} (i).
\end{proof}

All the bounds of Theorem \ref{easy} are sharp. The lower bound is sharp for the graph $K_2\times K_2$. By Theorem \ref{easy}, we get $\gamma_{R}(K_2\times K_2)\geq 4$, and the equality holds by Proposition \ref{complete}. For the first upper bound, $\gamma_{R}(G\times H)\leq 2\gamma_t(G)\gamma_{t}(H)$ we observe the family $P_4\times P_{4k}$. Here $\gamma_t(P_{4k})=2k$, and we have  $\gamma_{R}(P_4\times P_{4k})\leq 8k$. The equality follows by \cite[Theorem 5]{klobucar2015}. Finally, the second upper bound $\gamma_{R}(G\times H)\leq 6\gamma(G)\gamma(H)$ is sharp for the family $K_r\times K_t$, $t\geq r>3$, by Proposition \ref{complete}.

We now improve the upper bound $\gamma_{R}(G\times H)\leq 2\gamma_t(G)\gamma_{t}(H)$ from Theorem \ref{easy}. For this, let $G$ be a graph with no isolated vertex and $D$ a $\gamma_t(G)$-set. Let $D'\subseteq D$ be a dominating set of $G$ of minimum cardinality. Notice that $D'$ is not necessarily a $\gamma(G)$-set. For instance, in $C_8=v_1\dots v_8v_1$ we have a $\gamma_t(C_8)$ set $D=\{v_2,v_3,v_6,v_7\}$ and the only dominating set that is a subset of $D$ is $D$ it self. However, $D$ is not a $\gamma (C_8)$-set because $\gamma (C_8)=3$.

The set $D\setminus D'$ is denoted by $K_G(D)$ and is called a \emph{kernel} of $D$. By $k(G)$ we denote the maximum possible cardinality among all kernels $K_G(D)$ from all $\gamma_t(G)$-sets $D$ and their dominating subsets $D'$. For instance, in $P_5=v_1v_2v_3v_4v_5$, there exists a unique $\gamma_t(P_5)$-set $D=\{v_2,v_3,v_4\}$. The kernel of $D$ is then $K_{P_5}(D)=\{v_3\}$ and $k(P_5)=1$.

\begin{theorem}\label{better}
For any graphs $G$ and $H$ with no isolated vertex,
$$\gamma_{R}(G\times H)\leq 2\gamma_t(G)\gamma_{t}(H)-2k(G)k(H).$$
\end{theorem}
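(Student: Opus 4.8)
The plan is to improve upon the baseline bound $\gamma_{R}(G\times H)\le 2\gamma_t(G)\gamma_{t}(H)$ from Theorem \ref{easy}, which is obtained by placing weight $2$ on every vertex of $D_G\times D_H$ for minimum total dominating sets $D_G$ and $D_H$; total domination of both factors guarantees that each vertex $(u,v)$ has a neighbour $(u',v')$ with $u'\in D_G$, $v'\in D_H$, hence of weight $2$. The saving of $2k(G)k(H)$ should come from deleting the weight on a product block of kernels while preserving the Roman domination property. Concretely, I would first fix $\gamma_t(G)$- and $\gamma_t(H)$-sets $D_G$ and $D_H$ realizing the kernel numbers, with minimum dominating subsets $D_G'\subseteq D_G$ and $D_H'\subseteq D_H$ whose kernels $K_G=D_G\setminus D_G'$ and $K_H=D_H\setminus D_H'$ satisfy $|K_G|=k(G)$ and $|K_H|=k(H)$.

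Next I would define a function $g(W_0,\emptyset,W_2)$ on $G\times H$ by setting
$$W_2=(D_G\times D_H)\setminus(K_G\times K_H),\qquad W_0=V(G\times H)\setminus W_2.$$
Since $K_G\times K_H\subseteq D_G\times D_H$, the weight is immediately $\omega(g)=2|W_2|=2(\gamma_t(G)\gamma_{t}(H)-k(G)k(H))$, which is exactly the claimed value. Thus the whole argument reduces to verifying that $g$ is an RDF.

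For the verification, take $(u,v)\in W_0$ and look for a neighbour in $W_2$. A neighbour $(u',v')$ lies in $W_2$ precisely when $u'\in N_G(u)\cap D_G$, $v'\in N_H(v)\cap D_H$, and $(u',v')\notin K_G\times K_H$, i.e. $u'\in D_G'$ or $v'\in D_H'$. Total domination of $D_G$ and $D_H$ ensures $N_G(u)\cap D_G\neq\emptyset$ and $N_H(v)\cap D_H\neq\emptyset$, so the only potential failure occurs when every $D_G$-neighbour of $u$ lies in $K_G$ and every $D_H$-neighbour of $v$ lies in $K_H$, that is $N_G(u)\cap D_G'=\emptyset$ and $N_H(v)\cap D_H'=\emptyset$. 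I would therefore split into complementary cases: if $N_G(u)\cap D_G'\neq\emptyset$, choose $u'$ there and any $v'\in N_H(v)\cap D_H$; symmetrically if $N_H(v)\cap D_H'\neq\emptyset$. In either case the selected pair is adjacent to $(u,v)$ and sits in $W_2$.

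The crux, and the step I expect to be the main obstacle to phrase cleanly, is the remaining case $N_G(u)\cap D_G'=\emptyset$ and $N_H(v)\cap D_H'=\emptyset$. Here I would exploit that $D_G'$ and $D_H'$ are \emph{dominating} sets (not merely that $D_G,D_H$ are total dominating): having no neighbour of $u$ in the dominating set $D_G'$ forces $u\in D_G'$, and likewise $v\in D_H'$. But then $(u,v)\in D_G'\times D_H'\subseteq D_G\times D_H$ with $u\notin K_G$, so $(u,v)\notin K_G\times K_H$ and hence $(u,v)\in W_2$, contradicting $(u,v)\in W_0$. Consequently this case is vacuous, every vertex of $W_0$ has a neighbour in $W_2$, and $g$ is an RDF of weight $2\gamma_t(G)\gamma_{t}(H)-2k(G)k(H)$, establishing the bound.
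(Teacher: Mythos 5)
Your proposal is correct and follows essentially the same route as the paper: the same set $W_2=(D_G\times D_H)\setminus(K_G\times K_H)$, the same weight count, and the same use of the dominating subsets $D'_G,D'_H$ to rescue vertices whose only $D_G\times D_H$-neighbours fall in $K_G\times K_H$. If anything, your explicit handling of the residual case (showing $N_G(u)\cap D'_G=\emptyset$ and $N_H(v)\cap D'_H=\emptyset$ force $(u,v)\in D'_G\times D'_H\subseteq W_2$, a contradiction) is slightly more careful than the paper's one-line assertion that $u$ is adjacent to some $u_0\in D'_G$.
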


\begin{proof}
Let $D_G$ be a $\gamma_t(G)$-set together with $D'_G$ for which $K_G(D_G)$ has maximum cardinality $k(G)$ and $D_H$ be a $\gamma_t(H)$-set together with $D'_H$ for which $K_H(D_H)$ has maximum cardinality $k(H)$. Let $V_2=(D_G\times D_H)\setminus (K_G(D_G)\times K_H(D_H))$ and $V_0=V(G\times H)\setminus V_2$. We will show that $f(V_0,\emptyset,V_2)$ is an RDF on $G\times H$. Notice that every vertex $(u,v)$ has a neighbor in $D_G\times D_H$ because $D_G$ and $D_H$ are total dominating sets of $G$ and $H$, respectively. If $(u,v)$ is adjacent to $(u',v')\in K_G(D_G)\times K_H(D_H)$, then $u$ is adjacent to some $u_0\in D'_G$. Similarly, $v$ is adjacent to some $v_0\in D'_H$. Hence, $(u,v)$ is adjacent to a vertex $(u_0,v_0)$ from $V_2$, as desired. Therefore, $f$ is an RDF on $G\times H$ of weight $\omega(f)=2|V_2|=2\gamma_t(G)\gamma_{t}(H)-2k(G)k(H)$, and the upper bound follows.
\end{proof}

The upper bound of Theorem \ref{better} is sharp because already first upper bound from Theorem \ref{easy} was sharp. But we have a big family of graphs where it is better.

\begin{proposition}\label{better-1}
For any graphs $G$ and $H$ with universal vertices on at least four vertices,
$$\gamma_{R}(G\times H)=6.$$
\end{proposition}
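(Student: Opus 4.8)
The plan is to prove the two inequalities $\gamma_R(G\times H)\le 6$ and $\gamma_R(G\times H)\ge 6$ separately. The upper bound is where the universal-vertex hypothesis does its work, while the lower bound is essentially a structural feature of the direct product.

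For the upper bound I would exploit the universal vertices directly. Since a universal vertex dominates its whole factor, $\gamma(G)=\gamma(H)=1$. Feeding this into Theorem~\ref{teo-known-bounds}~(i) gives $\gamma_{tR}(G)\le 3\gamma(G)=3$, and because the universal vertices guarantee that neither factor has an isolated vertex, Theorem~\ref{teo-upper-bound-1} applies and yields $\gamma_R(G\times H)\le 2\gamma(H)\gamma_{tR}(G)\le 2\cdot 1\cdot 3=6$.

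For the lower bound I would argue by contradiction, assuming an RDF $f(V_0,V_1,V_2)$ with $\omega(f)\le 5$, so that $|V_2|\le 2$, and split on $|V_2|$. The recurring structural fact is that in a direct product two vertices sharing a coordinate are never adjacent; equivalently, a vertex of $V_2$ dominates nothing in its own ``row'' or ``column''. If $|V_2|=0$ then every vertex lies in $V_1$ and $\omega(f)\ge \n(G)\n(H)\ge 16$, a contradiction. If $|V_2|=1$, say $V_2=\{(a,b)\}$, then $|V_1|\le 3$ while every vertex outside $N_{G\times H}[(a,b)]$ must lie in $V_1$; counting non-neighbours and using $\deg(a,b)\le(\n(G)-1)(\n(H)-1)$ shows there are at least $\n(G)+\n(H)-2\ge 6$ such vertices, contradicting $|V_1|\le 3$.

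The main obstacle is the case $|V_2|=2$, where the crude degree count fails (two high-degree vertices could in principle cover almost everything), so instead of counting I would exhibit specific undominated vertices. Writing $V_2=\{(a_1,b_1),(a_2,b_2)\}$ we have $|V_1|\le 1$. If $a_1=a_2$ (or symmetrically $b_1=b_2$) the two chosen vertices lie in one column, an independent set, so the remaining $\n(H)-2\ge 2$ column vertices are undominated and forced into $V_1$. If $a_1\ne a_2$ and $b_1\ne b_2$, the two ``off-diagonal'' vertices $(a_1,b_2)$ and $(a_2,b_1)$ are distinct, do not belong to $V_2$, and each shares a coordinate with each $V_2$-vertex, hence are undominated and forced into $V_1$. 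In every subcase $|V_1|\ge 2$, contradicting $|V_1|\le 1$. This excludes $\omega(f)\le 5$ and gives $\gamma_R(G\times H)\ge 6$, which together with the upper bound proves the claimed equality.
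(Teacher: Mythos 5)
Your proof is correct. The lower bound is essentially identical to the paper's: the authors simply invoke ``the same reasons as the lower bound in Proposition~\ref{complete} for $t\geq r>3$,'' and that argument is precisely your case analysis on $|V_2|\in\{0,1,2\}$ driven by the fact that two vertices of a direct product sharing a coordinate are never adjacent (your treatment of the $|V_2|=2$ case, splitting into a shared coordinate versus the two off-diagonal vertices $(a_1,b_2)$, $(a_2,b_1)$, is exactly the paper's). Where you diverge is the upper bound: the paper deliberately routes it through Theorem~\ref{better}, taking $D_G=\{u,u'\}$, $D_G'=\{u\}$ (so $\gamma_t(G)=\gamma_t(H)=2$ and $k(G),k(H)\geq 1$) to get $2\cdot 2\cdot 2-2\cdot 1\cdot 1=6$, since the whole point of the proposition in context is to exhibit a family where the kernel-based bound beats the plain bound $2\gamma_t(G)\gamma_t(H)=8$. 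You instead use $\gamma(G)=\gamma(H)=1$ together with Theorem~\ref{teo-known-bounds}~(i) and Theorem~\ref{teo-upper-bound-1}, i.e.\ effectively the bound $6\gamma(G)\gamma(H)$ from Theorem~\ref{easy}. Both give $6$; your derivation is slightly more economical and shows the kernel machinery is not actually needed here, but it loses the illustrative purpose the authors had in mind for Theorem~\ref{better}.
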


\begin{proof}
Let $u$ and $v$ be universal vertices of $G$ and $H$, respectively. For every $u'\in V(G)\setminus \{u\}$ and every $v'\in V(H)\setminus \{v\}$ we have $D_G=\{u,u'\}$, $D'_G=\{u\}$, $D_H\{v,v'\}$, $D'_H=\{v\}$ and $\gamma_{R}(G\times H)\leq 6$ follows by Theorem \ref{better}. The  bound $\gamma_{R}(G\times H)\geq 6$ follows by the same reasons as the lower bound in Proposition~\ref{complete} for $t\geq r>3$. Therefore, the proof is complete.
\end{proof}

The following result gives a new upper bound on $\gamma_R(G\times H)$.

\begin{theorem}\label{teo-dom-tdom-n}
For any graphs $G$ and $H$ with no isolated vertex,
$$\gamma_{R}(G\times H)\leq \min\{\gamma(G)(\n(H)+\gamma_{t}(H)), \gamma(H)(\n(G)+\gamma_{t}(G))\}.$$
\end{theorem}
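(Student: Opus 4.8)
The plan is to exhibit a single Roman dominating function on $G\times H$ of weight exactly $\gamma(H)(\n(G)+\gamma_{t}(G))$; the other term in the minimum will then follow from the symmetry $G\times H\cong H\times G$. The guiding idea is to concentrate all positive labels on the fibres over a dominating set of $H$, spending label $2$ on a total dominating set of $G$ and label $1$ on the remaining vertices of each such fibre. Concretely, I would fix a $\gamma_{t}(G)$-set $S$ and a $\gamma(H)$-set $D$, and define $f(V_0,V_1,V_2)$ by
$$V_2=S\times D,\qquad V_1=(V(G)\setminus S)\times D,\qquad V_0=V(G)\times(V(H)\setminus D),$$
which clearly partition $V(G\times H)$.

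The weight computation is immediate: since $|V_2|=|S|\,|D|$ and $|V_1|=(\n(G)-|S|)\,|D|$, we obtain
$$\omega(f)=2|V_2|+|V_1|=|D|\bigl(2|S|+\n(G)-|S|\bigr)=\gamma(H)(\n(G)+\gamma_{t}(G)).$$
Thus the entire content of the argument reduces to verifying that $f$ is an RDF, i.e. that every vertex of $V_0$ has a neighbour in $V_2$.

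For the verification I would take $(u,v)\in V_0$, so that $v\in V(H)\setminus D$ and $u$ is arbitrary. Because $S$ is a \emph{total} dominating set of $G$, the vertex $u$ — even when it itself lies in $S$ — has some neighbour $u'\in S$; and because $D$ dominates $H$ and $v\notin D$, the vertex $v$ has some neighbour $v'\in D$. Then $uu'\in E(G)$ and $vv'\in E(H)$, so $(u,v)$ is adjacent in $G\times H$ to $(u',v')\in S\times D=V_2$, as required.

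I expect the only delicate point to be explaining why the fibres over $D$ must carry positive label instead of being placed in $V_0$. The obstruction is precisely that $D$ is a dominating, but in general not a \emph{total} dominating, set of $H$: a vertex $d\in D$ need not have a neighbour inside $D$, so a vertex $(u,d)$ assigned label $0$ could fail to reach $V_2=S\times D$ through its $H$-coordinate. Assigning label $1$ to all of $(V(G)\setminus S)\times D$ makes these vertices self-protected, and this is exactly what produces the summand $\n(G)$ (rather than a second total-domination term) in the bound; the asymmetric roles of the two factors — total domination demanded of $G$, ordinary domination of $H$ — are what must be matched carefully. Finally, exchanging the roles of $G$ and $H$ yields $\gamma_{R}(G\times H)\le\gamma(G)(\n(H)+\gamma_{t}(H))$, and taking the minimum of the two bounds completes the proof.
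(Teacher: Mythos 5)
Your construction is exactly the paper's (with the roles of the two factors swapped: the paper puts label $2$ on $D\times W$ for $D$ a $\gamma(G)$-set and $W$ a $\gamma_t(H)$-set, and label $1$ on $D\times(V(H)\setminus W)$, obtaining the other term of the minimum first), and your verification that every vertex of $V_0$ reaches $V_2$ via total domination in one factor and domination in the other is the argument the paper leaves as ``readily seen.'' The proposal is correct and takes essentially the same approach.
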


\begin{proof}
Let $D$ be a $\gamma(G)$-set and let $W$ be a $\gamma_t(H)$-set. We consider the function $f(V_0,V_1,V_2)$ such that $V_2=D\times W$, $V_1=D\times (V(H)\setminus W)$ and $V_0=V(G\times H)\setminus (V_1\cup V_2)$. It is readily seen that $f$ is an RDF on $G\times H$. Since $\omega(f)=\gamma(G)(\n(H)+\gamma_{t}(H))$, and by using a symmetrical argument, we deduce the upper bound.
\end{proof}

The bound above is tight for instance if we consider the family of direct product graphs $P_3\times K_n$ with $n\ge 3$. By Theorem \ref{teo-dom-tdom-n} we get $\gamma_{R}(P_3\times K_n)\leq 5$. The equality is obtained by the same arguments as in the proof of Proposition \ref{complete} for $\gamma_{R}(K_3\times K_n)$. Other sharp examples are $\gamma_{R}(K_3\times K_n)=5$, by Proposition \ref{complete}, and $\gamma_{R}(P_4\times P_{3k})=6k$, by \cite[Theorem 5]{klobucar2015}.



\section{Rooted product graphs}\label{SectionRooted}

Given a nontrivial graph $G$ and a nontrivial graph $H$ with root $v\in V(H)$, the \emph{rooted product graph} $G\circ_v H$ is defined as the graph obtained from $G$ and $H$ by taking one copy of $G$ and $\n(G)$ copies of $H$ and identifying the  $i^{th}$-vertex of $G$ with the vertex $v$ in the $i^{th}$-copy of $H$ for every $i\in \{1,\dots,\n(G)\}$ \cite{Godsil1978}.
Figure \ref{fig-complex} shows an example of a rooted product graph.

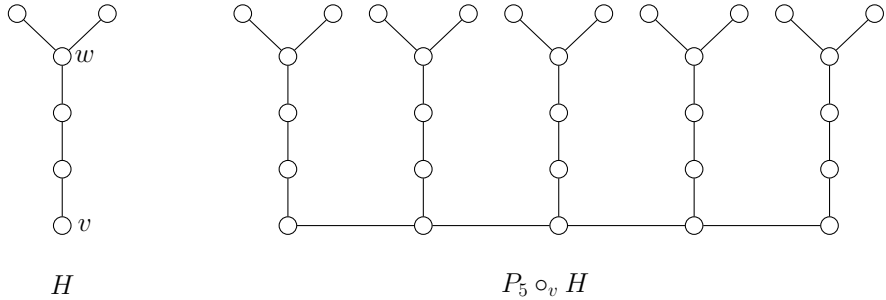
\begin{figure}[ht]
\centering
\begin{tikzpicture}[scale=.6, transform shape]

\node [draw, shape=circle] (aa1) at  (-5,0) {};
\node [draw, shape=circle] (bb1) at  (-5,1.25) {};
\node [draw, shape=circle] (cc1) at  (-5,2.5) {};
\node [draw, shape=circle] (dd1) at  (-5,3.75) {};
\node [draw, shape=circle] (ee2) at  (-4,4.7) {};
\node [draw, shape=circle] (ee3) at  (-6,4.7) {};

\node at (-4.5,0) {\Large $v$};
\node at (-4.5,3.75) {\Large $w$};
\node at (-5,-1.3) {\Large $H$};

\node [draw, shape=circle] (a1) at  (0,0) {};
\node [draw, shape=circle] (a2) at  (3,0) {};
\node [draw, shape=circle] (a3) at  (6,0) {};
\node [draw, shape=circle] (a4) at  (9,0) {};
\node [draw, shape=circle] (a5) at  (12,0) {};
\node at (5.7,-1.3) {\Large $P_5\circ_v H$};

\node [draw, shape=circle] (a11) at  (0,1.25) {};
\node [draw, shape=circle] (a12) at  (0,2.5) {};
\node [draw, shape=circle] (a13) at  (0,3.75) {};
\node [draw, shape=circle] (a14) at  (1,4.7) {};
\node [draw, shape=circle] (a15) at  (-1,4.7) {};

\node [draw, shape=circle] (a21) at  (3,1.25) {};
\node [draw, shape=circle] (a22) at  (3,2.5) {};
\node [draw, shape=circle] (a23) at  (3,3.75) {};
\node [draw, shape=circle] (a24) at  (4,4.7) {};
\node [draw, shape=circle] (a25) at  (2,4.7) {};

\node [draw, shape=circle] (a31) at  (6,1.25) {};
\node [draw, shape=circle] (a32) at  (6,2.5) {};
\node [draw, shape=circle] (a33) at  (6,3.75) {};
\node [draw, shape=circle] (a34) at  (7,4.7) {};
\node [draw, shape=circle] (a35) at  (5,4.7) {};

\node [draw, shape=circle] (a41) at  (9,1.25) {};
\node [draw, shape=circle] (a42) at  (9,2.5) {};
\node [draw, shape=circle] (a43) at  (9,3.75) {};
\node [draw, shape=circle] (a44) at  (10,4.7) {};
\node [draw, shape=circle] (a45) at  (8,4.7) {};

\node [draw, shape=circle] (a51) at  (12,1.25) {};
\node [draw, shape=circle] (a52) at  (12,2.5) {};
\node [draw, shape=circle] (a53) at  (12,3.75) {};
\node [draw, shape=circle] (a54) at  (13,4.7) {};
\node [draw, shape=circle] (a55) at  (11,4.7) {};

\draw(aa1)--(bb1)--(cc1)--(dd1)--(ee3);
\draw(dd1)--(ee2);

\draw(a1)--(a2)--(a3)--(a4)--(a5);

\draw(a1)--(a11)--(a12)--(a13)--(a14);
\draw(a13)--(a15);

\draw(a2)--(a21)--(a22)--(a23)--(a24);
\draw(a23)--(a25);

\draw(a3)--(a31)--(a32)--(a33)--(a34);
\draw(a33)--(a35);

\draw(a4)--(a41)--(a42)--(a43)--(a44);
\draw(a43)--(a45);

\draw(a5)--(a51)--(a52)--(a53)--(a54);
\draw(a53)--(a55);

\end{tikzpicture}
\caption{The graph $P_5\circ_v H$.}
\label{fig-complex}
\end{figure}
For every vertex $x\in V(G)$, $H_x$ will denote the copy of $H$ in $G\circ_v H$ containing $x$. The restriction of any function $f: V(G\circ_v H)\rightarrow \{0,1,2\}$ to the set $V(H_x)$ will be denoted by $f_x$.
Hence, if $f$ is a $\gamma_R(G\circ_v H)$-function, then as $V(G\circ_v H)=\cup_{x\in V(G)}V(H_x)$, we obtain that $$\gamma_R(G\circ_v H)=\omega(f)=\sum_{x\in V(G)}\omega(f_x).$$

\vspace{.1cm}

We recall two results given by Kuziak et al. \cite{Ismael-rooted-domination}, which will be useful later.

\begin{lemma}\label{Lemma H-v}{\em \cite{Ismael-rooted-domination}}
Let $H$ be a nontrivial graph. For any  $v\in V(H)$,
$$\gamma_R(H-v)\geq \gamma_R(H)-1.$$
\end{lemma}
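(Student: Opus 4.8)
The plan is to prove the inequality $\gamma_R(H-v)\geq \gamma_R(H)-1$ by taking an optimal Roman dominating function on $H-v$ and extending it to a Roman dominating function on $H$ at an additional cost of at most one unit of weight. Let $f(V_0,V_1,V_2)$ be a $\gamma_R(H-v)$-function. The only vertex of $H$ that is not yet accounted for is the deleted root $v$, and the only vertices of $H-v$ whose Roman domination condition might fail after reinserting $v$ are the neighbors of $v$. However, the key observation is that reinserting $v$ only \emph{adds} edges; it cannot destroy the Roman domination property for any vertex of $H-v$, since every vertex $u\in V_0$ still has its same neighbor with label $2$ inside $H-v$. So the only issue is covering $v$ itself.

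First I would define a candidate extension $g$ of $f$ to all of $V(H)$ by keeping $g(x)=f(x)$ for every $x\in V(H-v)$ and choosing an appropriate value $g(v)\in\{0,1,2\}$. The natural choice is to try $g(v)=1$, which increases the weight by exactly one and trivially satisfies the Roman condition at $v$ (since $g(v)=1$ means $v$ need not be dominated by a vertex of label $2$). This immediately yields a Roman dominating function on $H$ of weight $\omega(f)+1=\gamma_R(H-v)+1$, and therefore $\gamma_R(H)\leq \gamma_R(H-v)+1$, which rearranges to the desired inequality.

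The main point to verify carefully is that $g$ is genuinely an RDF on all of $H$. For every vertex $u\in V_0=\{x:f(x)=0\}$ with $u\neq v$, there exists a neighbor $w\in N_{H-v}(u)$ with $f(w)=2$; since $N_{H-v}(u)\subseteq N_H(u)$ and $g(w)=f(w)=2$, the condition still holds in $H$. The vertex $v$ receives label $1$ under $g$, so it is not in $g$'s zero set and imposes no constraint. Hence $g(V_0',V_1',V_2')$ with $V_2'=V_2$, $V_1'=V_1\cup\{v\}$, $V_0'=V_0$ is an RDF on $H$.

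I do not expect any serious obstacle here; the argument is a short extension-of-function argument. The only subtlety worth stating explicitly is why adding the root back cannot break the existing domination structure, namely that edge addition preserves Roman domination, so no previously dominated vertex loses its dominator. One could alternatively try the more economical-looking choice of leaving $g(v)=0$ when $v$ happens to have a neighbor of label $2$ in $f$, but this would not improve the stated bound and is unnecessary, so I would simply use the uniform choice $g(v)=1$ to keep the proof clean and self-contained.
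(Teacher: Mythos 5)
Your argument is correct and is the standard proof of this fact: extending a $\gamma_R(H-v)$-function to $H$ by assigning label $1$ to $v$ yields an RDF on $H$ of weight $\gamma_R(H-v)+1$, giving $\gamma_R(H)\le\gamma_R(H-v)+1$. The paper itself only cites this lemma from Kuziak et al.\ without reproving it, but your extension argument is exactly the one used there, and the verification that reinserting $v$ cannot break the Roman condition for vertices of $H-v$ is handled properly.
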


\begin{theorem}\label{teo-Roman-rooted}{\em \cite{Ismael-rooted-domination}}
Let $G$ and $H$ be nontrivial graphs. If $v\in V(H)$ is the root of $H$, then
$$\gamma(G)+\n(G)(\gamma_R(H)-1)\leq \gamma_R(G\circ_v H)\leq \n(G)\gamma_R(H).$$
\end{theorem}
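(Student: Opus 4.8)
The plan is to establish the two inequalities separately, in both cases exploiting the decomposition $\gamma_R(G\circ_v H)=\sum_{x\in V(G)}\omega(f_x)$ recorded above, and using Lemma~\ref{Lemma H-v} for the lower bound.

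For the upper bound I would start from a $\gamma_R(H)$-function $g$ and place a copy of $g$ on every copy $H_x$; that is, define $f$ by $f_x=g$ for each $x\in V(G)$ under the natural identification of $H_x$ with $H$. Verifying that $f$ is an RDF on $G\circ_v H$ is routine: every non-root vertex lies in a unique copy and has its full neighborhood inside that copy, so it is handled exactly as in $g$; a root $x$ with $f(x)=0$ corresponds to $g(v)=0$, hence has a neighbor of value $2$ inside $H_x$, and that neighbor remains adjacent to $x$ in the product. Summing the weights gives $\omega(f)=\n(G)\gamma_R(H)$, yielding the upper bound.

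For the lower bound I would fix a $\gamma_R(G\circ_v H)$-function $f$ and analyze each restriction $f_x$ through a single dichotomy: whether or not $f_x$ is itself an RDF on the copy $H_x$. If it is, then $\omega(f_x)\geq\gamma_R(H)$. If it is not, then the only vertex of $H_x$ that can fail the Roman condition is the root $x$, since every other vertex of $H_x$ has its whole neighborhood inside $H_x$; hence $f_x(x)=0$, the vertex $x$ has no value-$2$ neighbor inside $H_x$, and — as $f$ is an RDF on the whole product — $x$ must be dominated from outside, i.e.\ by a root neighbor in $G$ carrying value $2$. In this deficient case the restriction of $f_x$ to $V(H_x)\setminus\{x\}$ is an RDF of $H-v$, so $\omega(f_x)\geq\gamma_R(H-v)\geq\gamma_R(H)-1$ by Lemma~\ref{Lemma H-v}.

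The crux of the argument — and the step I expect to be the main obstacle — is converting the deficient copies into a domination statement on $G$. Let $A$ be the set of roots $x$ for which $f_x$ is an RDF on $H_x$, and $B=V(G)\setminus A$. Any root of value $2$ lies in $A$, because a positive value at the root forces $f_x$ to be an RDF on $H_x$; and by the analysis above every root in $B$ has a neighbor in $G$ of value $2$, hence a neighbor in $A$. Thus $A$ dominates $G$ and $|A|\geq\gamma(G)$. Combining the per-copy estimates,
$$\gamma_R(G\circ_v H)=\sum_{x\in A}\omega(f_x)+\sum_{x\in B}\omega(f_x)\geq |A|\gamma_R(H)+|B|(\gamma_R(H)-1)=\n(G)(\gamma_R(H)-1)+|A|,$$
and substituting $|A|\geq\gamma(G)$ delivers the lower bound.
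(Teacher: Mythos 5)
Your proof is correct. Note that the paper itself gives no proof of this statement---Theorem~\ref{teo-Roman-rooted} is imported from \cite{Ismael-rooted-domination}---so the fair comparison is with the machinery the paper builds around it, and there your argument is essentially the same: your sets $A$ and $B$ are the paper's $\mathcal{A}_f$ and $\mathcal{B}_f$, your per-copy estimates reproduce Lemma~\ref{restriction-H}\,(i) together with the $\gamma_R(H-v)\geq\gamma_R(H)-1$ step from Lemma~\ref{Lemma H-v}, and your observation that each deficient root must receive a value-$2$ label from a $G$-neighbour (hence $A$ dominates $G$) is exactly the domination step used in the proofs of Theorems~\ref{teo-principal-rooted} and~\ref{teo-char-1}. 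The only cosmetic difference is that you partition the copies by whether $f_x$ is an RDF on $H_x$ rather than by the weight threshold $\omega(f_x)\geq\gamma_R(H)$; this is harmless, since a copy whose restriction fails to be an RDF can only fail at the root, every root with a positive label lands in $A$, and the final chain $\sum_x\omega(f_x)\geq |A|\gamma_R(H)+|B|(\gamma_R(H)-1)=\n(G)(\gamma_R(H)-1)+|A|\geq\n(G)(\gamma_R(H)-1)+\gamma(G)$ goes through verbatim. The upper-bound construction (one copy of a $\gamma_R(H)$-function per copy of $H$) is the standard one and is verified correctly.
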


We continue this section with a useful lemma.

\begin{lemma}\label{restriction-H}
Let $f(V_0,V_1,V_2)$ be a $\gamma_R(G\circ_v H)$-function. The following statements hold for any vertex $x\in V(G)$:
\begin{enumerate}
\item[{\rm (i)}] $\omega(f_x)\geq \gamma_R(H)-1$, and
\item[{\rm (ii)}] if $\omega(f_x)=\gamma_R(H)-1$, then $f(x)=0$ and $f(S)\leq 1$ for $S=N_{G\circ_v H}(x)\cap V(H_x)$.
\end{enumerate}
\end{lemma}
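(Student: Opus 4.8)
The plan is to exploit the structural fact that in $G\circ_v H$ the only vertex of the copy $H_x$ incident with edges leaving $V(H_x)$ is the root vertex $x$ itself; every non-root vertex of $H_x$ has all of its neighbors inside $H_x$. Consequently the restriction $f_x$ behaves almost like a Roman dominating function on the copy $H_x\cong H$, the only possible defect being at the vertex $x$, which under the identification plays the role of the root $v$ and has $S=N_{G\circ_v H}(x)\cap V(H_x)$ corresponding to $N_H(v)$.

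For part (i), I would split according to the value $f(x)$. If $f(x)\geq 1$, then every non-root vertex $w$ of $H_x$ with $f_x(w)=0$ is Roman-dominated inside $H_x$, since its value-$2$ neighbor necessarily lies in $H_x$, and $x$ needs no domination; hence $f_x$ is an RDF on $H_x\cong H$ and $\omega(f_x)\geq \gamma_R(H)$. If instead $f(x)=0$, then the restriction of $f_x$ to $V(H_x)\setminus\{x\}$ is an RDF on $H_x-x\cong H-v$: a non-root vertex with value $0$ cannot have been dominated by $x$ because $f(x)=0$, so its dominator survives in $H_x-x$. Lemma \ref{Lemma H-v} then gives $\omega(f_x)\geq\gamma_R(H-v)\geq\gamma_R(H)-1$, and (i) follows in both cases.

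For part (ii), assume $\omega(f_x)=\gamma_R(H)-1$. The equality $f(x)=0$ is immediate from the first case of (i): if $f(x)\geq 1$ then $\omega(f_x)\geq\gamma_R(H)>\gamma_R(H)-1$. To prove $f(S)\leq 1$ I would argue by contradiction, producing an RDF on $H\cong H_x$ of weight at most $\gamma_R(H)-1$. Suppose $f(S)\geq 2$. If some $s\in S$ has $f(s)=2$, then $x$ is already dominated inside $H_x$, so $f_x$ itself is an RDF on $H_x\cong H$ of weight $\gamma_R(H)-1$, a contradiction. Otherwise $S$ contains two vertices $s_1,s_2$ with $f(s_1)=f(s_2)=1$, and I would define $h$ on $V(H_x)$ by $h(x)=2$, $h(s_1)=h(s_2)=0$ and $h=f_x$ elsewhere.

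The weight of $h$ is unchanged, $\omega(h)=\omega(f_x)+2-1-1=\gamma_R(H)-1$, and $h$ is an RDF on $H_x\cong H$: the vertices $s_1,s_2$, being adjacent to $x$, are now dominated by $x$ (value $2$), and every other vertex keeps its value; crucially, lowering $s_1,s_2$ from $1$ to $0$ cannot un-dominate any vertex, since a value-$1$ vertex never serves as a Roman dominator, and no value-$2$ vertex has been decreased. This yields an RDF on $H$ of weight $\gamma_R(H)-1<\gamma_R(H)$, the desired contradiction. I expect this final verification---that the reassignment neither leaves a vertex undominated nor increases the weight---to be the delicate point, and the split according to whether $S$ carries a value-$2$ vertex is exactly what makes the two subcases close cleanly.
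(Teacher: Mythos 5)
Your proof is correct and follows essentially the same strategy as the paper's: the key observation that only the root $x$ of $H_x$ has neighbors outside $V(H_x)$ is the same, and your part (ii) is virtually identical to the paper's (which likewise splits on whether $S$ meets $V_2$ and otherwise sets $f(x)=2$ while zeroing the value-$1$ neighbors of $x$ in $H_x$). The only difference is in part (i), where you handle the case $f(x)=0$ by passing to $H_x-x\cong H-v$ and invoking Lemma \ref{Lemma H-v}, whereas the paper avoids that lemma by directly converting $f_x$ into an RDF on $H$ via $g(x)=\max\{f(x),1\}$, at a cost of at most $1$ in weight; both arguments are valid.
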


\begin{proof}
Let  $x\in V(G)$. First, we observe that every vertex in $V_0\cap (V(H_x)\setminus \{x\})$ is adjacent to some vertex in $V_2\cap V(H_x)$. Thus, the function $g$, defined by  $g(x)=\max\{f(x),1\}$ and $g(u)=f(u)$ whenever $u\in V(H_x)\setminus \{x\}$, is an RDF on $H_x$ and with that also on $H$. Now, if $\omega(f_x)<\gamma_R(H)-1$, then $\omega(g)<\gamma_R(H)$, a contradiction. Hence $\omega(f_x)\geq \gamma_R(H)-1$ and the proof of (i) is completed.

For (ii) assume that $\omega(f_x)=\gamma_R(H)-1$ and let $S=N_{G\circ_v H}(x)\cap V(H_x)$. If $f(x)>0$, then  $f_x$ is an RDF on $H_x$, which implies that $\gamma_R(H)-1=\omega(f_x)\geq \gamma_R(H_x)\geq\gamma_R(H)$, a contradiction. Hence, $f(x)=0$. Now, suppose that $f(S)>1$. If there exists a vertex $y\in N(x)\cap V(H_x)\cap V_2$, then $f_x$ is an RDF on $H_x$ and, as above, we obtain a contradiction. So, $N(x)\cap V(H_x)\cap V_2=\emptyset$. Therefore $|N(x)\cap V(H_x)\cap V_1|\geq 2$, which implies that the function $g'$, defined by $g'(x)=2$, $g'(u)=0$ whenever $u\in N(x)\cap V(H_x)\cap V_1$ and $g'(u)=f(u)$ otherwise, is an RDF on $H_x$ of weight at most $\omega(f_x)=\gamma_R(H)-1$, a contradiction. Therefore, $f(S)\leq 1$, which completes the proof.
\end{proof}

\vspace{.1cm}

From Lemma \ref{restriction-H} (i) we deduce that any $\gamma_R(G\circ_v H)$-function $f$  defines two subsets $\mathcal{A}_f$ and $\mathcal{B}_f$ of $V(G)$ as follows.

\begin{align*}
\mathcal{A}_f=&\{x\in V(G): \omega(f_x)\geq \gamma_R(H)\},\\
\mathcal{B}_f=&\{x\in V(G) : \omega(f_x)=\gamma_R(H)-1\}.
\end{align*}

\begin{lemma}\label{UpperBoundRooted}
Let $f$ be a $\gamma_R(G\circ_v H)$-function. If $\mathcal{B}_f\neq \emptyset$, then the following statements hold:
\begin{enumerate}
\item[{\rm (i)}] $\gamma_R(H-v)=\gamma_R(H)-1$, and
\item[{\rm (ii)}] $\gamma_R(G\circ_v H)\leq \gamma_R(G) + \n(G)(\gamma_R(H)-1)$.
\end{enumerate}
\end{lemma}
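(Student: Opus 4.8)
The plan is to prove the two parts in order, using part (i) as the engine for part (ii). For part (i), I would start by selecting a vertex $x\in\mathcal{B}_f$, which exists by hypothesis, so that $\omega(f_x)=\gamma_R(H)-1$. By Lemma \ref{restriction-H} (ii), this forces $f(x)=0$. The key observation is that every non-root vertex $u\in V(H_x)\setminus\{x\}$ has all of its neighbors inside $V(H_x)$, so if $f(u)=0$ then its dominating neighbor (a vertex of value $2$) lies in $V(H_x)$; and since $f(x)=0$, that dominating neighbor cannot be $x$ itself and hence lies in $V(H_x)\setminus\{x\}$. Consequently, the restriction of $f_x$ to $V(H_x)\setminus\{x\}$ is an RDF on $H_x-x\cong H-v$, of weight $\omega(f_x)-f(x)=\gamma_R(H)-1$. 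This yields $\gamma_R(H-v)\leq\gamma_R(H)-1$, and combining with the reverse inequality $\gamma_R(H-v)\geq\gamma_R(H)-1$ from Lemma \ref{Lemma H-v} gives the claimed equality.

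For part (ii), I would use the equality just established to fix a $\gamma_R(H-v)$-function $h$ of weight exactly $\gamma_R(H)-1$, together with a $\gamma_R(G)$-function $g$. The plan is to build an explicit RDF $F$ on $G\circ_v H$ by placing the $G$-function on the root vertices and a copy of the $H-v$ function on each of the remaining vertices of each copy: set $F(x)=g(x)$ for every $x\in V(G)$ (the roots), and $F(u)=h(u)$ for every non-root vertex $u\in V(H_x)\setminus\{x\}$, identified with its image in $H-v$.

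The verification that $F$ is an RDF splits into two cases according to where a vertex of value $0$ sits. A non-root vertex $u$ with $F(u)=0$ corresponds to a vertex of value $0$ under $h$, so it has a neighbor of value $2$ within $H-v$; that neighbor lies in $V(H_x)\setminus\{x\}$ and remains adjacent to $u$ in $G\circ_v H$, so $u$ is Roman dominated. A root vertex $x$ with $F(x)=g(x)=0$ has, since $g$ is an RDF on $G$, a $G$-neighbor $x'$ with $g(x')=2$; because the copy of $G$ is embedded in $G\circ_v H$ and $F(x')=g(x')=2$, the vertex $x$ is Roman dominated as well. Hence $F$ is an RDF, and summing over copies gives $\omega(F)=\omega(g)+\n(G)\,\omega(h)=\gamma_R(G)+\n(G)(\gamma_R(H)-1)$, which is the desired upper bound.

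I expect the only delicate point to be the dominance bookkeeping at the root vertices in part (i): one must argue carefully that $f(x)=0$ prevents the root from serving as the value-$2$ dominator of a non-root neighbor, which is exactly what lets the restricted function descend to a genuine RDF on $H-v$ rather than merely on $H$. Once that is secured, part (ii) is a routine gluing construction whose weight computation is immediate.
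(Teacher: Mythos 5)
Your proposal is correct and follows essentially the same route as the paper: part (i) uses Lemma \ref{restriction-H} (ii) to get $f(x)=0$ and then restricts $f_x$ to $V(H_x)\setminus\{x\}$ to obtain an RDF on $H_x-x\cong H-v$, closing with Lemma \ref{Lemma H-v}; part (ii) is the same gluing of a $\gamma_R(G)$-function on the roots with a $\gamma_R(H-v)$-function on each copy. You merely spell out the verification steps that the paper leaves implicit.
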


\begin{proof}
For (i) let $x\in \mathcal{B}_f$. By Lemma \ref{restriction-H} (ii) we obtain that $f(x)=0$. This implies that the function $f_x$ restricted to $V(H_x)\setminus \{x\}$ is an RDF on $H_x-x$. So $\gamma_R(H-v)=\gamma_R(H_x-x)\leq \omega(f_x)-f(x)=\gamma_R(H)-1$. Lemma \ref{Lemma H-v} leads to $\gamma_R(H-v)=\gamma_R(H)-1$, which completes the proof of (i).

For (ii) observe that from any $\gamma_R(G)$-function and any $\gamma_R(H-v)$-function we can construct an RDF on $G\circ_v H$ of weight $\gamma_R(G)+ \n(G)\gamma_R(H-v)$. By the equality given in (i), it follows that $\gamma_R(G\circ_v H)\leq \gamma_R(G)+ \n(G)\gamma_R(H-v)=\gamma_R(G)+ \n(G)(\gamma_R(H)-1)$, which completes the proof.
\end{proof}

Next, we state the three possible values that $\gamma_R(G\circ_v H)$ can reach. We might recall that graphs $G$ and $H$ with $\gamma_R(G\circ_v H)$ equal to each of these three values were already given in \cite{Ismael-rooted-domination}, although there was not proved that these are the only possible values $\gamma_R(G\circ_v H)$ can reach. In this sense, with the next results we completely settle the problem of the Roman domination in rooted product graphs, already initiated in \cite{Ismael-rooted-domination}.

\begin{theorem}\label{teo-principal-rooted}
Let $G$ be a graph without isolated vertices and let $H$ be a nontrivial graph. If $v\in V(H)$ is the root, then
$$\gamma_R(G\circ_v H)\in\{\n(G)\gamma_R(H), \, \gamma_R(G)+\n(G)(\gamma_R(H)-1), \, \gamma(G)+\n(G)(\gamma_R(H)-1)\}.$$
\end{theorem}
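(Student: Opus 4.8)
The plan is to fix a $\gamma_R(G\circ_v H)$-function $f$ and measure, copy by copy, how much its weight exceeds the minimal baseline $\gamma_R(H)-1$ guaranteed by Lemma \ref{restriction-H}(i). Setting $a_x=\omega(f_x)-(\gamma_R(H)-1)\ge 0$ for each $x\in V(G)$ and writing $c=\sum_{x\in V(G)}a_x$, the identity $\gamma_R(G\circ_v H)=\sum_{x}\omega(f_x)$ gives
$$\gamma_R(G\circ_v H)=\n(G)(\gamma_R(H)-1)+c,$$
so the whole statement reduces to proving $c\in\{\gamma(G),\gamma_R(G),\n(G)\}$. Here $a_x=0$ exactly for $x\in\mathcal B_f$ and $a_x\ge 1$ exactly for $x\in\mathcal A_f$, so $c=\sum_{x\in\mathcal A_f}a_x\ge|\mathcal A_f|$. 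First I would record the coarse bounds $\gamma(G)\le c\le\n(G)$: the upper one is immediate from the upper bound of Theorem \ref{teo-Roman-rooted}, and the lower one follows once I show that $\mathcal A_f$ is a dominating set of $G$. For the latter, take $x\in\mathcal B_f$; by Lemma \ref{restriction-H}(ii) we have $f(x)=0$ and $f(S)\le 1$ on $S=N_{G\circ_v H}(x)\cap V(H_x)$, so $x$ has no value-$2$ neighbour inside its own copy and, being a $0$-vertex of an RDF, must have a neighbour $x'\in N_G(x)$ with $f(x')=2$; since $f(x')\ne 0$ forces $x'\in\mathcal A_f$, the set $\mathcal A_f$ dominates $\mathcal B_f$ (and itself), giving $|\mathcal A_f|\ge\gamma(G)$.

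Next I would split on whether $\mathcal B_f=\emptyset$. If $\mathcal B_f=\emptyset$ then $\mathcal A_f=V(G)$ and $c=\sum_{x}a_x\ge\n(G)$, which with $c\le\n(G)$ forces $c=\n(G)$ and the first value. If $\mathcal B_f\neq\emptyset$, Lemma \ref{UpperBoundRooted}(ii) already yields $c\le\gamma_R(G)$, so it remains to prove $c\in\{\gamma(G),\gamma_R(G)\}$, i.e.\ that no value strictly between $\gamma(G)$ and $\gamma_R(G)$ can occur. The natural device is to project $f$ to a function $h$ on $G$ by $h(x)=2$ when $f(x)=2$, $h(x)=1$ when $x\in\mathcal A_f$ with $f(x)\le 1$, and $h(x)=0$ when $x\in\mathcal B_f$. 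Because every $0$-vertex of $h$ lies in $\mathcal B_f$ and is $G$-adjacent to a root carrying value $2$, $h$ is an RDF on $G$, whence $\gamma_R(G)\le\omega(h)=|\mathcal A_f|+|T|$, where $T=\{x:f(x)=2\}\subseteq\mathcal A_f$. Comparing with $c=|\mathcal A_f|+\sum_{x\in\mathcal A_f}(a_x-1)$, I see that whenever $\sum_{x\in\mathcal A_f}(a_x-1)\ge|T|$ the projection gives $\gamma_R(G)\le c$, and together with $c\le\gamma_R(G)$ this pins $c=\gamma_R(G)$, the second value.

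The remaining, and genuinely delicate, case is when this slack inequality fails, which happens precisely because of \emph{load-bearing} copies: roots $x$ with $f(x)=2$ and $a_x=1$ (so $f_x$ is a $\gamma_R(H)$-function with $f(v)=2$) whose value-$2$ root is needed only to dominate neighbours in $\mathcal B_f$. Here I expect the main obstacle to lie, and the plan is to resolve it by an exchange argument using the equality $\gamma_R(H-v)=\gamma_R(H)-1$ supplied by Lemma \ref{UpperBoundRooted}(i): replacing such a copy's data by a $\gamma_R(H-v)$-function on $H_x-x$ lowers its weight by one, so optimality of $f$ must be protected by the external domination this root provides. Quantifying this, I would take $f$ optimal with $|\mathcal A_f|$ minimum and argue that each load-bearing root privately covers some vertex of $\mathcal B_f$, so that pairing each such root with one private dependent exhibits $\mathcal A_f$ as a minimum dominating set in which every $a_x=1$; this forces $c=|\mathcal A_f|=\gamma(G)$, the third value. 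Verifying that the exchange never breaks the RDF property elsewhere, and that the private-dependent assignment can be made consistently (a Hall-type check matching load-bearing roots to $\mathcal B_f$), is the step I expect to require the most care.
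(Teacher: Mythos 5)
Your reduction to showing $c\in\{\gamma(G),\gamma_R(G),\n(G)\}$, the case $\mathcal{B}_f=\emptyset$, and the projection argument giving $\gamma_R(G)\le|\mathcal{A}_f|+|T|\le c$ when $\sum_{x\in\mathcal{A}_f}(a_x-1)\ge|T|$ are all sound; in fact your observation that $f(S)\le 1$ already forces every vertex of $\mathcal{B}_f$ to have a value-$2$ root as a $G$-neighbour lets you bypass the paper's extremal choice of $z\in\mathcal{B}_f$ and its Subcase 2.1 altogether. The genuine gap is in your final case. You correctly isolate the obstruction --- a root $x_0\in T$ with $a_{x_0}=1$, i.e.\ a copy on which $f_{x_0}$ is a $\gamma_R(H)$-function taking value $2$ at the root (and such an $x_0$ must exist when the slack inequality fails, since $\sum_{x\in T}(a_x-1)\le\sum_{x\in\mathcal{A}_f}(a_x-1)<|T|$) --- but the resolution you sketch (re-choosing $f$ to minimize $|\mathcal{A}_f|$, assigning private dependents in $\mathcal{B}_f$ to load-bearing roots via a Hall-type matching, and concluding that $\mathcal{A}_f$ is a minimum dominating set with $a_x=1$ for every $x\in\mathcal{A}_f$) is neither carried out nor plausible as stated: nothing prevents copies in $\mathcal{A}_f\setminus T$ from having $a_x$ arbitrarily large, and the structural conclusion ``$|\mathcal{A}_f|=\gamma(G)$ and all $a_x=1$'' is something you could only extract \emph{after} knowing $c=\gamma(G)$, so the plan is close to circular.

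The case closes much more simply, and this is in essence what the paper does. The load-bearing copy hands you a $\gamma_R(H)$-function $g$ with $g(v)=2$ (the translate of $f_{x_0}$), and since $\mathcal{B}_f\neq\emptyset$, Lemma \ref{UpperBoundRooted} (i) gives $\gamma_R(H-v)=\gamma_R(H)-1$. Paste $g$ onto the copies rooted at a $\gamma(G)$-set $D$, and onto each remaining copy put value $0$ at the root and a $\gamma_R(H-v)$-function on the rest; every root outside $D$ is then dominated by the value-$2$ root of a neighbouring copy in $D$, so this is an RDF on $G\circ_v H$ of weight $|D|\gamma_R(H)+(\n(G)-|D|)(\gamma_R(H)-1)=\gamma(G)+\n(G)(\gamma_R(H)-1)$. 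Hence $c\le\gamma(G)$, which with your lower bound $c\ge\gamma(G)$ forces $c=\gamma(G)$. (The paper organizes the same content as a proof by contradiction: assuming $c>\gamma(G)$, it shows via this very construction that every $x\in T$ satisfies $a_x\ge 2$ --- exactly your slack inequality --- and then runs your projection count.) With this replacement your argument is complete.
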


\begin{proof}
Let $f(V_0,V_1,V_2)$ be a $\gamma_R(G\circ_v H)$-function.
Now, we consider the subsets $\mathcal{A}_f, \mathcal{B}_f \subseteq V(G)$ associated to $f$ and differentiate the following cases.

\vspace{0.3cm}

\noindent
Case 1.  $\mathcal{B}_f=\emptyset$.
In this case, for any $x\in V(G)$ it follows that $\omega(f_x)\geq  \gamma_R(H)$. This implies that $\gamma_R(G\circ_v H)=\omega(f)=\sum_{x\in V(G)}\omega(f_x)\geq \n(G)\gamma_R(H)$, and by Theorem \ref{teo-Roman-rooted}, we deduce that
$\gamma_R(G\circ_v H)=\n(G)\gamma_R(H)$.

\vspace{0.3cm}

\noindent
Case 2.  $\mathcal{B}_f\neq \emptyset$. By Theorem \ref{teo-Roman-rooted} we have that $\gamma_R(G\circ_v H)\geq \gamma(G)+\n(G)(\gamma_R(H)-1).$ Now, we assume that $\gamma_R(G\circ_v H)> \gamma(G)+\n(G)(\gamma_R(H)-1)$. Let $z\in \mathcal{B}_f$ and $S=N_{G\circ_v H}[z]\cap V(H_z)$ such that $f(S)$ is maximum among all vertices in $\mathcal{B}_f$. By Lemma \ref{restriction-H} (ii) we obtain that $f(z)=0$ and $f(S)\leq 1$. We analyze the next two subcases.

\vspace{.2cm}

Subcase 2.1. $f(S)=1$. In this subcase there exists a vertex $y\in S\cap V_1$. On the way to a contradiction we define a function $h$ on $H_z$ as follows: $h(z)=2$, $h(y)=0$ and $h(u)=f_z(u)$ whenever $u\in V(H_z)\setminus \{z,y\}$. It is easy to see that $h$ is an RDF on $H_z$ of weight $\omega(h)=\gamma_R(H_z)$, \emph{i.e.}, $h$ is a $\gamma_R(H_z)$-function. Next, from $h$, $f_z$ and any $\gamma(G)$-set $D$, we define a function $f'$ on $G\circ_v H$ as follows. For every $x\in D$, the restriction of $f'$ to $V(H_x)$ is induced from $h$. Moreover, if $x\in V(G)\setminus D$, then the restriction of $f'$ to $V(H_x)$ is induced from $f_z$. Note that $f'$ is an RDF on $G\circ_v H$ of weight $\omega(f')\leq |D|\gamma_R(H_z)+ |V(G)\setminus D|(\gamma_R(H_z)-1)=\gamma(G)+\n(G)(\gamma_R(H)-1)$, which is a contradiction.

\vspace{.2cm}

Subcase 2.2. $f(S)=0$. In this subcase, we first proceed to show that if $y\in \mathcal{A}_f\cap V_2$, then $\omega(f_y)>\gamma_R(H)$. Suppose that there exists a vertex $y'\in \mathcal{A}_f\cap V_2$ such that $\omega(f_{y'})=\gamma_R(H)$. Proceeding analogously as in the Subcase 2.1, from $f_{y'}$, $f_z$ and any $\gamma(G)$-set $D$ we can construct an RDF on $G\circ_v H$ of weight $\gamma(G)+\n(G)(\gamma_R(H)-1)$, which is a contradiction. Hence, for every $y\in \mathcal{A}_f\cap V_2$, it follows that $\omega(f_y)>\gamma_R(H)$, as required.

We now observe that, as $f(S)=0$,  every vertex $x\in \mathcal{B}_f$ satisfies that $f(N_{G\circ_v H}[x]\cap V(H_x))=0$, which implies that $N_{G\circ_v H}(x)\cap \mathcal{A}_f\cap V_2\neq \emptyset$. Thus, $\mathcal{A}_f\cap V_2$ dominates $\mathcal{B}_f$.

Next, we define a function $h$ on $G$ as follows. If $x\in \mathcal{B}_f$, then $h(x)=0$; if $x\in \mathcal{A}_f\cap V_2$, then $h(x)=2$; and finally, if $x\in \mathcal{A}_f\setminus V_2$, then $h(x)=1$. Note that $h$ is an RDF on $G$ of weight $2|\mathcal{A}_f\cap V_2|+|\mathcal{A}_f\setminus V_2|$. Therefore,
\begin{align*}
 \gamma_R(G\circ_v H)&=\sum_{x\in \mathcal{A}_f\cap V_2}\omega(f_x)+\sum_{x\in \mathcal{A}_f\setminus V_2}\omega(f_x)+\sum_{x\in \mathcal{B}_f}\omega(f_x)\\
                       &\geq \sum_{x\in \mathcal{A}_f\cap V_2}(\gamma_R(H)+1)+\sum_{x\in \mathcal{A}_f \setminus V_2}\gamma_R(H)+\sum_{x\in \mathcal{B}_f}(\gamma_R(H)-1)\\
                       &=2|\mathcal{A}_f\cap V_2|+|\mathcal{A}_f \setminus V_2|+\sum_{x\in V(G)}(\gamma_R(H)-1)\\
                       &\geq \gamma_R(G) +\n(G)(\gamma_R(H)-1).
\end{align*}

\noindent
Finally, by Lemma \ref{UpperBoundRooted} (ii) we deduce that $\gamma_{oiR}(G\circ_v H)= \gamma_R(G) +\n(G)(\gamma_R(H)-1)$ and the proof is complete.
\end{proof}

In \cite{Ismael-rooted-domination}, the authors provided some sufficient conditions under which the three possible values of Roman domination number of rooted product graphs are achieved. In addition, we next give two examples in which we can observe that these expressions of $\gamma_R(G\circ_v H)$ are realizable.

\begin{itemize}
\item If $G$ is a graph with no isolated vertex and $H$ is the graph shown in Figure \ref{fig-complex}, then
\begin{itemize}
\item $\gamma_R(G\circ_v H)=\gamma(G)+\n(G)(\gamma_R(H)-1)=\gamma(G)+3\n(G)$.
\item $\gamma_R(G\circ_w H)=\n(G)\gamma_R(H)=4\n(G)$.
\end{itemize}
\item If $G$ is a graph with no isolated vertex and $H$ is the path $P_4$ (where the root $v\in V(P_4)$ is a vertex of degree one), then  $\gamma_R(G\circ_v H)=\gamma_R(G)+\n(G)(\gamma_R(H)-1)=\gamma_R(G)+2\n(G)$.
\end{itemize}

We next proceed to characterize the graphs $G\circ_v H$ with $\gamma_R(G\circ_v H)=\gamma(G)+ \n(G)(\gamma_R(H)-1)$.

\begin{theorem}\label{teo-char-1}
Let $G$ be a graph with no isolated vertex and $H$ any nontrivial graph with root $v\in V(H)$. The following statements are equivalent.
\begin{enumerate}
\item[{\rm (i)}] $\gamma_R(G\circ_v H)=\gamma(G)+ \n(G)(\gamma_R(H)-1)$.
\item[{\rm (ii)}] $\gamma_R(H-v)=\gamma_R(H)-1$ and there exists a $\gamma_R(H)$-function $g$ such that $g(v)=2$.
\end{enumerate}
\end{theorem}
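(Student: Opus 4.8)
The plan is to prove the two implications separately. The reverse implication (ii)$\Rightarrow$(i) is a direct construction meeting the lower bound of Theorem~\ref{teo-Roman-rooted}, while the forward implication (i)$\Rightarrow$(ii) requires extracting the desired $\gamma_R(H)$-function from an arbitrary optimal function, which is where the real work lies.

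For (ii)$\Rightarrow$(i), I would fix a $\gamma(G)$-set $D$, the assumed $\gamma_R(H)$-function $g$ with $g(v)=2$, and a $\gamma_R(H-v)$-function $g'$ of weight $\gamma_R(H)-1$ (which exists since $\gamma_R(H-v)=\gamma_R(H)-1$). On each copy $H_x$ with $x\in D$ I place $g$, so that the root $x$ receives value $2$; on each copy $H_x$ with $x\in V(G)\setminus D$ I place $g'$ on $V(H_x)\setminus\{x\}$ and set the value of $x$ to $0$. The only vertices needing verification are the roots $x\notin D$: since $D$ dominates $G$, such an $x$ has a $G$-neighbor in $D$, whose copy carries value $2$ at its own root and hence dominates $x$. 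The resulting weight is $|D|\gamma_R(H)+(\n(G)-|D|)(\gamma_R(H)-1)=\gamma(G)+\n(G)(\gamma_R(H)-1)$, which with Theorem~\ref{teo-Roman-rooted} yields the equality.

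For (i)$\Rightarrow$(ii), let $f$ be a $\gamma_R(G\circ_v H)$-function. I would first argue $\mathcal{B}_f\neq\emptyset$: otherwise Case~1 in the proof of Theorem~\ref{teo-principal-rooted} forces $\gamma_R(G\circ_v H)=\n(G)\gamma_R(H)$, which strictly exceeds $\gamma(G)+\n(G)(\gamma_R(H)-1)$ because $\gamma(G)<\n(G)$ (as $G$ has no isolated vertex it has an edge). Then Lemma~\ref{UpperBoundRooted}(i) gives $\gamma_R(H-v)=\gamma_R(H)-1$, the first half of (ii). To produce a $\gamma_R(H)$-function valued $2$ at $v$, I pick $z\in\mathcal{B}_f$ and set $S=N_{G\circ_v H}[z]\cap V(H_z)$; by Lemma~\ref{restriction-H}(ii) we have $f(z)=0$ and $f(S)\in\{0,1\}$. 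If some such $z$ has $f(S)=1$, I mimic the Subcase~2.1 modification: raising $z$ to $2$ and lowering its unique positive in-copy neighbor to $0$ turns $f_z$ into an RDF of weight $\gamma_R(H)$ on $H_z\cong H$ with value $2$ at the root, i.e.\ exactly the desired function.

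The main obstacle is the remaining case, where every $z\in\mathcal{B}_f$ has $f(S)=0$. Here each such $z$, being a $0$-valued root all of whose in-copy neighbors are also $0$, must be dominated by an external $G$-neighbor of value $2$, which cannot lie in $\mathcal{B}_f$ and so lies in $\mathcal{A}_f\cap V_2$; thus $\mathcal{A}_f\cap V_2$ dominates $\mathcal{B}_f$ in $G$. If some $y\in\mathcal{A}_f\cap V_2$ has $\omega(f_y)=\gamma_R(H)$, then $f_y$ is itself a $\gamma_R(H)$-function with value $2$ at its root and we are done. Otherwise $\omega(f_y)\geq\gamma_R(H)+1$ for every such $y$, and the weight estimate of Subcase~2.2 (using that the map $h$ equal to $0$ on $\mathcal{B}_f$, $2$ on $\mathcal{A}_f\cap V_2$, and $1$ on $\mathcal{A}_f\setminus V_2$ is an RDF on $G$) gives $\gamma_R(G\circ_v H)\geq\gamma_R(G)+\n(G)(\gamma_R(H)-1)$; combined with (i) this forces $\gamma_R(G)\leq\gamma(G)$, contradicting the fact that $\gamma_R(G)>\gamma(G)$ for any graph with an edge. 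The delicate point, compared with the proof of Theorem~\ref{teo-principal-rooted}, is that I may no longer assume a strict inequality on $\gamma_R(G\circ_v H)$; instead the $+1$ gap on $\omega(f_y)$ must be converted, via the equality in (i), into the sharp inequality $\gamma_R(G)>\gamma(G)$, which closes the argument.
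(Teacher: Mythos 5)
Your proposal is correct. The direction (ii)$\Rightarrow$(i) coincides with the paper's construction. In the direction (i)$\Rightarrow$(ii) you establish $\mathcal{B}_f\neq\emptyset$ and invoke Lemma \ref{UpperBoundRooted}(i) exactly as the paper does, but you extract the $\gamma_R(H)$-function $g$ with $g(v)=2$ by a different mechanism: you replay the case analysis of Theorem \ref{teo-principal-rooted}, handling $f(S)=1$ by the local modification of Subcase 2.1 (lift the root to $2$, drop its unique positive in-copy neighbour to $0$) and handling $f(S)=0$ by either finding $y\in\mathcal{A}_f\cap V_2$ with $\omega(f_y)=\gamma_R(H)$ or deriving $\gamma_R(G\circ_v H)\geq\gamma_R(G)+\n(G)(\gamma_R(H)-1)$ and contradicting $\gamma_R(G)>\gamma(G)$. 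The paper instead observes that every $x\in\mathcal{B}_f$ (whether $f(S)$ is $0$ or $1$) must be Roman-dominated by an external root in $\mathcal{A}_f\cap V_2$, so that $\mathcal{A}_f$ is a dominating set of $G$ with $|\mathcal{A}_f|\geq\gamma(G)$; a single chain of inequalities then forces $\omega(f_x)=\gamma_R(H)$ for \emph{every} $x\in\mathcal{A}_f$, and restricting $f$ to the copy rooted at such an $x$ lying in $V_2$ gives $g$ at once. Your route is sound but longer; its one delicate point, which you correctly identify and handle, is converting the $+1$ surplus on $\omega(f_y)$ into the strict inequality $\gamma_R(G)>\gamma(G)$ rather than relying on a strict hypothesis as in Theorem \ref{teo-principal-rooted}. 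The paper's route buys a uniform treatment of the two $f(S)$ cases and avoids the modification argument altogether.
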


\begin{proof}
First, we assume that (i) holds, \emph{i.e.}, $\gamma_R(G\circ_v H)=\gamma(G)+ \n(G)(\gamma_R(H)-1)$. Let $f(V_0,V_1,V_2)$ be a $\gamma_R(G\circ_v H)$-function. Since $\gamma(G)<\n(G)$, it follows that $\mathcal{B}_f\neq \emptyset$. So, (i) of Lemma \ref{UpperBoundRooted} leads to $\gamma_R(H-v)=\gamma_R(H)-1$.
By (ii) of Lemma \ref{restriction-H}  we deduce $\mathcal{B}_f\subseteq V_0$, and also, if $x\in \mathcal{B}_f$, then as $f(N_{G\circ_v H}(x)\cap V(H_x))\leq 1$, there exists a vertex $y\in N_{G\circ_v H}(x)\cap \mathcal{A}_f\cap V_2$. Thus, $\mathcal{A}_f$ is a dominating set of $G$. Hence,
\begin{align*}
\gamma(G)+ \n(G)(\gamma_R(H)-1)&=\omega(f)\\
                             &=\sum_{x\in \mathcal{A}_f}\omega(f_x)+\sum_{x\in \mathcal{B}_f}\omega(f_x)\\
                             &\geq\sum_{x\in \mathcal{A}_f}\gamma_R(H)+\sum_{x\in \mathcal{B}_f}(\gamma_R(H)-1)\\
                             &\geq |\mathcal{A}_f|+\n(G)(\gamma_R(H)-1)\\
                             &\geq \gamma(G)+\n(G)(\gamma_R(H)-1)
\end{align*}
So, we have equalities in the inequality chain above, and as a consequence, we deduce that every vertex $y\in \mathcal{A}_f\cap V_2$ satisfies that $\omega(f_x)=\gamma_R(H)$. Thus, as $H_x\cong H$, there exists a $\gamma_R(H)$-function $g$ such that $g(v)=2$.

On the other hand, we assume that $\gamma_R(H-v)=\gamma_R(H)-1$ and that there exists a $\gamma_R(H)$-function $g$ such that $g(v)=2$.
 Let $D$ be a $\gamma(G)$-set and $g'$ be a $\gamma_R(H-v)$-function. From $D$, $g'$ and $g$, we define a function $h$ on $G\circ_v H$ as follows. For every $x\in D$, the restriction of $h$ to $V(H_x)$ is induced by $g$. Moreover, if $x\in V(G)\setminus D$, then $h(x)=0$ and the restriction of $h$ to $V(H_x-x)$ is induced by $g'$. Observe that $h$ is an RDF on $G\circ_v H$, and so $\gamma_R(G\circ_v H)\leq \omega(h)=|D|\gamma_R(H)+|V(G)\setminus D|(\gamma_R(H)-1)=\gamma(G)+ \n(G)(\gamma_R(H)-1)$. Therefore, Theorem \ref{teo-principal-rooted} leads to $\gamma_R(G\circ_v H)=\gamma(G)+ \n(G)(\gamma_R(H)-1)$, which completes the proof.
\end{proof}

Now, we characterize the graphs $G$ and $H$ (and the root $v\in V(H)$) that satisfy the equality $\gamma_R(G\circ_v H)=\gamma_R(G)+ \n(G)(\gamma_R(H)-1)$.

\begin{theorem}\label{teo-char-2}
Let $G\not\cong \cup K_2$ be a graph with no isolated vertex and $H$ any nontrivial graph with root $v\in V(H)$. The following statements are equivalent.
\begin{enumerate}
\item[{\rm (i)}] $\gamma_R(G\circ_v H)=\gamma_R(G)+ \n(G)(\gamma_R(H)-1)$.
\item[{\rm (ii)}] $\gamma_R(H-v)=\gamma_R(H)-1$ and $g(v)\leq 1$ for every $\gamma_R(H)$-function $g$.
\end{enumerate}
\end{theorem}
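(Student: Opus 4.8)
The plan is to leverage the trichotomy of Theorem~\ref{teo-principal-rooted} together with the already-established characterization of the smallest value in Theorem~\ref{teo-char-1}, reducing the whole argument to one strict-ordering observation plus two short elimination steps. The crucial preliminary fact is that, under the present hypotheses, the three candidate values are \emph{distinct} and strictly ordered:
\[
\gamma(G)+\n(G)(\gamma_R(H)-1)<\gamma_R(G)+\n(G)(\gamma_R(H)-1)<\n(G)\gamma_R(H).
\]
The left inequality amounts to $\gamma(G)<\gamma_R(G)$, which holds for every graph with no isolated vertex; the right one amounts to $\gamma_R(G)<\n(G)$, and this is exactly where the hypothesis $G\not\cong\cup K_2$ enters, since $\gamma_R(G)=\n(G)$ holds precisely when every vertex of $G$ has degree at most one, i.e.\ when $G$ is a disjoint union of edges. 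Consequently $\gamma_R(G\circ_v H)$ coincides with exactly one of the three values, and the three cases are mutually exclusive.

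For the implication (i)$\Rightarrow$(ii) I would pick a $\gamma_R(G\circ_v H)$-function $f$. Its weight is the middle value, which is strictly below $\n(G)\gamma_R(H)$, so Case~1 of Theorem~\ref{teo-principal-rooted} (the case $\mathcal{B}_f=\emptyset$) is impossible; hence $\mathcal{B}_f\neq\emptyset$ and Lemma~\ref{UpperBoundRooted}(i) yields $\gamma_R(H-v)=\gamma_R(H)-1$. For the remaining claim, suppose toward a contradiction that some $\gamma_R(H)$-function $g$ has $g(v)=2$; then condition (ii) of Theorem~\ref{teo-char-1} is satisfied, so that theorem forces $\gamma_R(G\circ_v H)=\gamma(G)+\n(G)(\gamma_R(H)-1)$, the smallest value, contradicting that the weight equals the strictly larger middle value. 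Therefore every $\gamma_R(H)$-function $g$ satisfies $g(v)\leq 1$.

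For the converse (ii)$\Rightarrow$(i) I would first obtain the bound $\gamma_R(G\circ_v H)\leq\gamma_R(G)+\n(G)(\gamma_R(H)-1)$ via the explicit gluing used in the proof of Lemma~\ref{UpperBoundRooted}(ii) — place a $\gamma_R(G)$-function on the roots and a copy of a $\gamma_R(H-v)$-function on each $V(H_x)\setminus\{x\}$ — evaluating the weight with $\gamma_R(H-v)=\gamma_R(H)-1$. By the trichotomy and the strict ordering, $\gamma_R(G\circ_v H)$ is then either the middle or the smallest value. To rule out the latter, observe that if $\gamma_R(G\circ_v H)=\gamma(G)+\n(G)(\gamma_R(H)-1)$, then Theorem~\ref{teo-char-1} would supply a $\gamma_R(H)$-function $g$ with $g(v)=2$, contradicting the hypothesis in (ii). Hence the value is the middle one, which is (i).

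The step I expect to be most delicate is the strict separation of the three candidate values, and in particular pinning down that $\gamma_R(G)=\n(G)$ occurs for no graph outside the family $\cup K_2$: this is exactly why that family must be excluded, since for it the middle and largest values collapse and (i)$\Rightarrow$(ii) breaks down — one could then have $\mathcal{B}_f=\emptyset$ while $\gamma_R(H-v)\neq\gamma_R(H)-1$. Once this ordering is secured, the remainder is routine bookkeeping built on Theorems~\ref{teo-principal-rooted} and~\ref{teo-char-1}.
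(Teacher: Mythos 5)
Your proof is correct and follows essentially the same route as the paper's: both directions rest on the trichotomy of Theorem~\ref{teo-principal-rooted}, Lemma~\ref{UpperBoundRooted}(i), the gluing construction for the upper bound, and elimination of the smallest value via Theorem~\ref{teo-char-1}, with the strict inequalities $\gamma(G)<\gamma_R(G)<\n(G)$ doing the separation. Your explicit justification that $G\not\cong\cup K_2$ is exactly what guarantees $\gamma_R(G)<\n(G)$ is a point the paper leaves implicit, but otherwise the argument is the same.
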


\begin{proof}
First, we assume that (i) holds, \emph{i.e.}, $\gamma_R(G\circ_v H)=\gamma_R(G)+ \n(G)(\gamma_R(H)-1)$. Let $f(V_0,V_1,V_2)$ be a $\gamma_R(G\circ_v H)$-function. Since $\gamma_R(G)<\n(G)$, it follows that $\mathcal{B}_f\neq \emptyset$. So, Lemma \ref{UpperBoundRooted} (i) leads to $\gamma_R(H-v)=\gamma_R(H)-1$.
Moreover, since $\gamma(G)<\gamma_R(G)$, Theorem \ref{teo-char-1} leads to $g(v)\leq 1$ for every $\gamma_R(H)$-function $g$. Hence, (ii) holds.

On the other hand, assume that (ii) holds. As in the previous proof, from any $\gamma_R(G)$-function and any $\gamma_R(H-v)$-function we can construct an RDF on $G\circ_v H$ of weight $\gamma_R(G)+ \n(G)\gamma_R(H-v)$. Since $\gamma_R(H-v)=\gamma_R(H)-1$, it follows that $\gamma_R(G\circ_v H)\leq \gamma_R(G)+ \n(G)\gamma_R(H-v)=\gamma_R(G)+ \n(G)(\gamma_R(H)-1)$. Hence, Theorem \ref{teo-principal-rooted} leads to $\gamma_R(G\circ_v H)\in \{\gamma_R(G)+ \n(G)(\gamma_R(H)-1), \gamma(G)+ \n(G)(\gamma_R(H)-1)\}$. Finally, as $g(v)\leq 1$ for every $\gamma_R(H)$-function $g$, by Theorem \ref{teo-char-1} we deduce that $\gamma_R(G\circ_v H)=\gamma_R(G)+ \n(G)(\gamma_R(H)-1)$, which completes the proof.
\end{proof}

Finally, we characterize the graphs (and the root) reaching the equality $\gamma_R(G\circ_v H)=\n(G)\gamma_R(H)$. Observe that, as shown in Theorem \ref{teo-principal-rooted}, there are three possible expressions  for the Roman domination number of $G\circ_v H$. Thus, for the case of the equality   $\gamma_R(G\circ_v H)=\n(G)\gamma_R(H)$, the corresponding characterization can be derived by eliminating the previous results (Theorems \ref{teo-char-1} and \ref{teo-char-2}) from the family of all graphs $G$ without isolated vertices and all nontrivial graphs $H$ with root $v\in V(H)$.

\begin{theorem}\label{teo-char-3}
Let $G\not\cong \cup K_2$ be a graph with no isolated vertex and $H$ any nontrivial graph with root $v\in V(H)$. Then $\gamma_R(G\circ_v H)=\n(G)\gamma_R(H)$ if and only if $\gamma_R(H-v)\geq\gamma_R(H)$.
\end{theorem}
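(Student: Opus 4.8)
The plan is to prove the equivalence by leveraging the trichotomy already established in Theorem~\ref{teo-principal-rooted} together with the two characterizations in Theorems~\ref{teo-char-1} and~\ref{teo-char-2}. Since Theorem~\ref{teo-principal-rooted} guarantees that $\gamma_R(G\circ_v H)$ takes exactly one of three values, and the other two values are characterized precisely, the value $\n(G)\gamma_R(H)$ is achieved if and only if the conditions for the other two are \emph{both} violated. Thus the natural route is to show that the condition $\gamma_R(H-v)\geq \gamma_R(H)$ is equivalent to the negation of the common hypothesis shared by Theorems~\ref{teo-char-1} and~\ref{teo-char-2}.

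First I would observe the key structural fact: by Lemma~\ref{Lemma H-v} we always have $\gamma_R(H-v)\geq \gamma_R(H)-1$, so the condition $\gamma_R(H-v)\geq \gamma_R(H)$ is simply the negation of $\gamma_R(H-v)=\gamma_R(H)-1$. Now both Theorem~\ref{teo-char-1}(ii) and Theorem~\ref{teo-char-2}(ii) require $\gamma_R(H-v)=\gamma_R(H)-1$ as their first clause, and they partition the remaining possibilities by whether some $\gamma_R(H)$-function $g$ has $g(v)=2$, or every such $g$ satisfies $g(v)\leq 1$. These two sub-cases are mutually exclusive and exhaustive \emph{given} $\gamma_R(H-v)=\gamma_R(H)-1$. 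Hence, whenever $\gamma_R(H-v)=\gamma_R(H)-1$ holds, the value of $\gamma_R(G\circ_v H)$ is forced to be one of the two smaller expressions, never $\n(G)\gamma_R(H)$.

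For the forward direction, I would argue the contrapositive: if $\gamma_R(H-v)=\gamma_R(H)-1$, then exactly one of the hypotheses of Theorems~\ref{teo-char-1} and~\ref{teo-char-2} is satisfied, so $\gamma_R(G\circ_v H)$ equals either $\gamma(G)+\n(G)(\gamma_R(H)-1)$ or $\gamma_R(G)+\n(G)(\gamma_R(H)-1)$; since $G\not\cong \cup K_2$ has no isolated vertex we have $\gamma(G)<\n(G)$ and $\gamma_R(G)<\n(G)$, so both of these are strictly less than $\n(G)\gamma_R(H)$, contradicting $\gamma_R(G\circ_v H)=\n(G)\gamma_R(H)$. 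For the reverse direction, I would suppose $\gamma_R(H-v)\geq\gamma_R(H)$, i.e.\ $\gamma_R(H-v)=\gamma_R(H)-1$ fails; then neither Theorem~\ref{teo-char-1}(ii) nor Theorem~\ref{teo-char-2}(ii) can hold, so $\gamma_R(G\circ_v H)$ equals neither of the two smaller values, and by the trichotomy of Theorem~\ref{teo-principal-rooted} the only remaining option is $\gamma_R(G\circ_v H)=\n(G)\gamma_R(H)$.

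I expect the main obstacle to be purely bookkeeping rather than conceptual: one must verify that the hypothesis $G\not\cong \cup K_2$ (equivalently, $\gamma_R(G)<\n(G)$) is exactly what is needed to keep the three expressions of Theorem~\ref{teo-principal-rooted} \emph{distinct}, so that the trichotomy genuinely partitions the possibilities rather than allowing coincidences that would break the elimination argument. A careful statement should confirm that $\gamma_R(G)<\n(G)$ holds precisely when $G$ has no isolated vertex and no component equal to $K_2$, and that under this hypothesis $\gamma(G)+\n(G)(\gamma_R(H)-1)$ and $\gamma_R(G)+\n(G)(\gamma_R(H)-1)$ are both strictly below $\n(G)\gamma_R(H)$; the strictness of these inequalities is what legitimizes reading off the value of $\gamma_R(G\circ_v H)$ from which characterizing condition holds.
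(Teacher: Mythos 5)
Your proposal is correct and follows essentially the same route as the paper: both directions are handled by combining the trichotomy of Theorem~\ref{teo-principal-rooted} with the characterizations in Theorems~\ref{teo-char-1} and~\ref{teo-char-2}, using Lemma~\ref{Lemma H-v} to reduce the condition to whether $\gamma_R(H-v)=\gamma_R(H)-1$, and the hypothesis $G\not\cong\cup K_2$ to guarantee $\gamma_R(G)<\n(G)$ so the two smaller expressions are strictly below $\n(G)\gamma_R(H)$. No substantive differences from the paper's argument.
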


\begin{proof}
First, we assume that $\gamma_R(G\circ_v H)=\n(G)\gamma_R(H)$. By Lemma \ref{Lemma H-v} we have that $\gamma_R(H-v)\geq\gamma_R(H)-1$. If $\gamma_R(H-v)=\gamma_R(H)-1$, then by Theorems \ref{teo-char-1} and \ref{teo-char-2} we deduce that $\n(G)\gamma_R(H)=\gamma_R(G\circ_v H)\leq \gamma_R(G)+\n(G)(\gamma_R(H)-1)$, which is a contradiction as $\gamma_R(G)<\n(G)$. Therefore, $\gamma_R(H-v)\geq\gamma_R(H)$, as desired.

On the other hand, we assume that $\gamma_R(H-v)\geq\gamma_R(H)$.
Hence, Theorems \ref{teo-char-1} and \ref{teo-char-2} lead to $\gamma_R(G\circ_v H)\notin \{\gamma(G)+\n(G)(\gamma_R(H)-1), \gamma_R(G)+\n(G)(\gamma_R(H)-1)\}$, which implies that $\gamma_R(G\circ_v H)=\n(G)\gamma_R(H)$ by Theorem \ref{teo-principal-rooted}. Therefore, the proof is complete.
\end{proof}

Clearly, one might recall that the characterizations above rely on the following fact. It is necessary to know the relationship between $\gamma_R(H-v)$ and $\gamma_R(H)$ in the graphs $H$ used in the rooted product, as well as, the existence of $\gamma_R(H)$-functions with labels $1$ or $2$ in their roots. Thus, an interesting open problem that it is then of interest is as follows.\\

\noindent
\textbf{Open question:} For a given graph $H$ and a vertex $v\in V(H)$, which is the relationship that exists between $\gamma_R(H-v)$ and $\gamma_R(H)$?\\

On the other hand, as mentioned before, studies on Roman domination in Cartesian product graphs were published in \cite{Yero,YeroJA2013}; and studies on the rooted product graphs were initiated in \cite{Ismael-rooted-domination}, and continued here in our work. In consequence, it would be of interest to continue these studies by considering the hierarchical product of graphs (see \cite{Barriere}), which is a subgraph of the Cartesian product as well as supergraph of the rooted product.

\section*{Acknowledgments}

The second author (Iztok Peterin) has been partially supported by the Slovenian Research Agency by the projects No. J1-1693 and J1-9109. 
The last author (Ismael G. Yero) has been partially supported by ``Junta de Andaluc\'ia'', FEDER-UPO Research and Development Call, reference number UPO-1263769.

\end{document}